\newtheorem{theorem}{Theorem} 	      	      	                              
\newtheorem{lemma}[theorem]{Lemma}     	       	      	      	      	      
\newtheorem{proposition}[theorem]{Proposition} 	      	      	      	      
\newtheorem{definition}[theorem]{Definition} 	      	      	                
\newtheorem{remark}[theorem]{Remark}                                          
\newtheorem{problem}[theorem]{Problem}
\numberwithin{equation}{section}                                              
\numberwithin{theorem}{section}                                               
\newcommand{\mc}[1]{\mathcal{#1}}                                             
\newcommand{\R}{\mathbb{R}}                                                   
\newcommand{\N}{\mathbb{N}}                                                   
\newcommand\numberthis{\addtocounter{equation}{1}\tag{\theequation}}
\newcommand{\nb}{\nabla}
\newcommand{\nasla}{\slashed{\nabla}}
\begin{document}

\title{Interior control of waves on time dependent domains}

\author{Vaibhav Kumar Jena}
\address{School of Mathematical Sciences\\
Queen Mary University of London\\
London E1 4NS\\ United Kingdom}
\email{v.k.jena@qmul.ac.uk}

\begin{abstract}
We obtain a novel interior control result for wave equations on time dependent domains. This is done by deriving a suitable Carleman estimate and proving the corresponding observability inequality. We consider the wave equation with time dependent lower order coefficients, without any time analyticity assumptions. Moreover, we obtain improved control regions when compared with standard Carleman methods.
\end{abstract}

\maketitle

\section{Introduction}

In this article, we address an interior control problem for $(n+1)$-dimensional wave equations with time dependent lower order terms, on time dependent domains. Such domains are also known as domains with a moving boundary.
The main tool to solve this control problem is a suitable Carleman estimate. This estimate is a direct consequence of the geometric nature of the corresponding Carleman estimate from \emph{Shao} \cite{Arick}, which solves a boundary control problem for similar general waves on time dependent domains. We also adapt some techniques from the author's article \cite{jena}, which obtained improved interior control results for waves on static domains. We achieve the best known Carleman based interior control result for general waves on such time dependent domains.

First, we present the notion of a moving boundary domain as follows\footnote{See Definition \ref{def_U_domain_MB} for the precise definition.} 
\begin{equation*} \label{eq_def_U_MB}
\mc{U} := \bigcup_{ \tau \in \R } \Big( \{ \tau \} \times \Omega_\tau \Big) \text{,}
\end{equation*}
where each \( \Omega_\tau \subset \R^n \) is an open and bounded domain, and they vary smoothly along \( \tau \). 
On this domain we want to control the following PDE
\begin{equation} \label{eq_ctrl_sys_MB}
\begin{cases}
- \partial_{tt}^2 y + \Delta_x y - \nabla_\mc{X} y + q y = F \textbf{1}_W, \qquad & \text{on } \mc{U},\\
(y, \partial_t y) = (y_0^-,y_1^-), \qquad & \text{at initial time}, \\
y=0, \qquad & \text{on } \partial \mc{U},
\end{cases}
\end{equation}
with $(y_0^-,y_1^-) \in L^2(\cdot) \times H^{-1}(\cdot)$, and where $\mc{X} \in C^\infty(\bar{\mc{U}};\R^{1+n})$ is a vector field, $q \in C^\infty(\bar{\mc{U}})$ is the potential and $W \subset \mc{U}$ is an interior subset, $\textbf{1}_W$ denotes the characteristic function over $W$, and $F$ is a forcing term. The control problem in this context is the following:

\emph{Given any $(y_0^\pm,y_1^\pm)$, does there exist a (control) function $F$ such that the solution $y$ of \eqref{eq_ctrl_sys_MB} satisfies}
\[ (y, \partial_t y) = (y_0^+,y_1^+), \quad \text{ at final time}?\]

\subsection{Literature}

First, let us give a brief review of some results on control of waves on time dependent domains. The work of \emph{Bardos-Chen} \cite{bard_chen:ctrlstab_wave3} solved the interior control problem for the free wave equation\footnote{That is, for $-\partial_{tt}^2 y + \Delta_x y =0$, without any lower order terms.} in a domain that is expanding in time. One of the geometric assumptions in this work requires that the energy of the wave equation does not change too much under the boundary expansion. This result was extended to geometric free waves in \cite{lu_li_chen_yao:ctrlstab_wave_moving}, using Riemannian geometric methods. 

Furthermore, the work of \emph{Miranda} \cite{mira:hum_var_coeff} solves a suitable boundary control problem for a hyperbolic PDE on static cylinders by using the \emph{Hilbert uniqueness method} (developed in \emph{Lions} \cite{lionj:control_hum,lionj:ctrlstab_hum}). This result is then used in \emph{Miranda} \cite{mira:control_var_boundary} to solve a boundary control problem for the free wave equation on time dependent domains that have the form
\[ \bigcup_{\tau} \left( \{ \tau \} \times k(\tau) \cdot \Omega \right), \]
for a suitable function $k(\tau)$, which ensures that the domain becomes cylindrical as $t \rightarrow \infty$.

In the one-dimensional case, some results related to control of waves on domains with moving boundaries can be found in \cite{cui_jiang_wang:control_wave_fec, sun_li_lu:control_wave_moving, sengou:obs_control_wave, sengou:obs_control_wave2, wang_he_li:control_wave_noncyl}. 
For instance, the work in \emph{Cui, Jiang, and Wang} \cite{cui_jiang_wang:control_wave_fec} considers the boundary control problem for the equation described by the motion of a string with one end point fixed, and the control acting on the fixed end point. A similar string equation control problem was also considered in \emph{Sun, Li, and Lu} \cite{sun_li_lu:control_wave_moving}, where the control was put on the moving end point.

The recent article \emph{Nakao} \cite{nakao} solves an interior control problem for the free wave equation, with initial data in $H^1_0 \times L^2$ and the added assumption that the $\Omega_\tau$ does not change \emph{too rapidly} along $\tau$. However, we consider the wave equation with lower order time dependent coefficients and obtain better control regions.

A very general result for boundary control of waves on time dependent domains is the work in \emph{Shao} \cite{Arick}. This result is applicable to the $(n+1)$-dimensional wave equation with time dependent lower order coefficients, on time dependent domains. The geometric assumptions in this work are that the boundary is timelike and any two $\Omega_{\tau_1},\Omega_{\tau_2}$ are diffeomorphic.

\subsection{Setting}

To present our main result we give some precise definitions and explain the setting of our problem.

\begin{definition} \label{def_U_domain_MB}
Let $\tau_-,\tau_+ \in \R$ be such that $\tau_- < \tau_+$. Then, we consider a domain $\mc{U}$ that satisfies the following properties:
\begin{enumerate}
\item $\mc{U} \subset \R^{1+n}$, with smooth timelike boundary $\partial \mc{U}$.

\item For each $\tau \in \R$, define $\Omega_\tau$ as
\begin{equation} \label{eq_def_om_t_MB}
\Omega_\tau := \{ x \in \R^n |(\tau,x) \in \mc{U} \}.
\end{equation}
Then each $\Omega_\tau$ is a non-empty, bounded, and open subset of $\R^n$.

\item There exists a smooth future directed timelike vector field $\mc{Z}$ on $\bar{\mc{U}}$ such that $\mc{Z}|_{\partial \mc{U}}$ is tangent to $\partial \mc{U}$.
\end{enumerate}
\end{definition}

The above definition of $\mc{U}$ implies that \( \mc{U} \) looks like
\begin{equation} \label{eq_def_U_1_MB}
\mc{U} := \bigcup_{ \tau \in \R } \Big( \{ \tau \} \times \Omega_\tau \Big) \text{.}
\end{equation}

Consider $\Omega_0$ from \eqref{eq_def_om_t_MB} and let $(x^1,\ldots,x^n)$ be a coordinate system on $\Omega_0$. This coordinate system can be moved to any $\Omega_\tau$ by using the integral curves of $\mc{Z}$; we leave the coordinates constant as we move along the integral curves. Hence, using the moving coordinates $(t,x^1,\cdots,x^n)$ gives that $\mc{U} \simeq \R \times \Omega_0$. That is, the domain $\mc{U}$ can be reparametrised into a time-static cylinder.

Now, for $\tau \in \R$, let $\mc{U}_\tau$ denote the cross section
\[ \mc{U}_\tau := \mc{U} \cap \{ t= \tau \}. \]

For given $\tau_-,\tau_+ \in \R$ satisfying $\tau_- < \tau_+$, define
\begin{align*}
\mc{U}_{\tau_-,\tau_+} & := \mc{U} \cap \{ \tau_- < t < \tau_+ \}, \numberthis \label{eq_def_U_pm_MB}\\
\partial\mc{U}_{\tau_-,\tau_+} & := \partial\mc{U} \cap \{ \tau_- < t < \tau_+ \}.
\end{align*}

In this setting, we consider the following PDE as our control system
\begin{equation} \label{eq.intro_wave_MB}
\begin{cases} - \partial_{tt}^2 y + \Delta_x y - \nabla_\mc{X} y + q y = F \textbf{1}_W, \qquad & \text{in } \mc{U}_{\tau_-,\tau_+} \text{,} \\
(y,\partial_t y) = (y_0^- ,y_1^-),& \text{on } \mc{U}_{\tau_-}, \\
 y = 0, & \text{on } \partial \mc{U}_{\tau_-,\tau_+},
\end{cases}
\end{equation}
where $(y_0^- ,y_1^-) \in L^2(\mc{U}_{\tau_-}) \times H^{-1} (\mc{U}_{\tau_-})$, $\mc{X} \in C^\infty(\bar{\mc{U}};\R^{1+n})$ is a vector field, $q \in C^\infty(\bar{\mc{U}})$ is the potential, and $W \subset \mc{U}_{\tau_-,\tau_+}$ is an interior subset. Then, we want to answer the following question: 

\begin{problem} \label{eq_ctrl_pblm_MB}
Given any $(y_0^{\pm} ,y_1^{\pm}) \in L^2(\mc{U}_{\tau_{\pm}}) \times H^{-1} (\mc{U}_{\tau_{\pm}})$, does there exist a function $F$, such that the solution of \eqref{eq.intro_wave_MB} satisfies
\begin{equation}
(y,\partial_t y) = (y_0^+ ,y_1^+), \quad \text{on } \mc{U}_{\tau_+}?
\end{equation}
\end{problem}

Due to a standard duality argument (see \cite{dolec_russe:obs_control, lionj:control_hum}), to solve the control problem we want to establish an \emph{observability inequality} of the type
\begin{equation} \label{eq_intro_obs_ineq_MB}
\int_{\mc{U}_{\tau_\pm}} (|\nb_{t,x} \phi|^2 + \phi^2) \lesssim \int_W (|\partial_t \phi|^2 + \phi^2),
\end{equation}
where \(\phi\) is the solution of the adjoint system
\begin{equation}\label{eq.intro_obs_MB}
\begin{cases}
-\partial^2_{tt} \phi + \Delta_x \phi + \nabla_\mathcal{X} \phi + V \phi= 0 , \qquad & \text{in } \mc{U}_{\tau_-,\tau_+},\\
( \phi, \partial_t\phi)= ( \phi_0, \phi_1), \qquad & \text{on } \mc{U}_{\tau_-}, \\
\phi = 0, & \text{on }  \partial \mc{U}_{\tau_-,\tau_+},
\end{cases}
\end{equation}
with $ ( \phi_0, \phi_1) \in H^1_0(\mc{U}_{\tau_-}) \times L^2(\mc{U}_{\tau_-})$, and $V= q + \nb_\alpha \mc{X}^\alpha$. Henceforth, our goal will be to show the above observability inequality.

There are various methods that can be used to prove observability, such as multiplier methods, microlocal analysis, and Carleman estimates. Out of the three, Carleman estimates are applicable to a bigger class of PDEs, in that they can be applied to waves with lower order coefficients and domains that are time dependent. Hence, we will derive a suitable Carleman estimate to solve the considered controllability/observability problem.

Carleman estimates are weighted integral estimates which can be used for showing suitable unique continuation properties for PDEs, see the work by \emph{Carleman} \cite{carl:uc_strong}. In our context, modern use of Carleman estimates derive from the work of \emph{H\"ormander} \cite{hor:lpdo4} (also see \emph{Cald\'eron} \cite{cald:unique_cauchy}). 

In the case of time static domains, for interior control results obtained via Carleman estimates, one has the control region given by $W := (-T,T) \times \omega$, where
\begin{align*}
\omega := \mc{O}_\sigma (\Gamma_+) \cap \Omega, \qquad & \Gamma_+ := \{ x \in \partial \Omega | (x-x_0)\cdot \nu >0 \}, \numberthis \label{eq_std_carl_rslt_MB}\\
\mc{O}_\sigma (\Gamma_+) := \{ y \in \R^n : & |y-x| < \sigma, \text{ for some } x \in \Gamma_+ \},
\end{align*}
where $x_0 \notin \bar{\Omega}$ and $\nu$ is the outward unit normal of $\Omega$. For instance, see \cite{Carleman, FYZ, Zhang}.

\subsection{A regularity issue}
The article \cite{jena} by the author, solves the interior control problem for wave equations with time dependent lower order terms on \emph{time-static} domains and obtains significantly improved control regions compared to standard Carleman based results. This is done by obtaining a Carleman estimate for \emph{wave-type} operators in the setting of $\R_t^m \times \R_x^n$; such operators are also known as ultrahyperbolic operators. Note that this work considers wave equation with initial data in the regular space $H^1_0 \times L^2$. However, in the current setting of time dependent domains, we consider wave equation with initial data in the weaker space $L^2 \times H^{-1}$.

On time dependent domains the control problem for waves with initial data in $H^1_0 \times L^2$ is not yet solved. There is a regularity issue which prevents us from considering initial data in the regular space $H^1_0 \times L^2$. In this case, due to duality, the observability problem has initial data in $L^2 \times H^{-1}$, and it is not possible to get a $L^2$-Carleman estimate for $\phi$ using the same method as in \cite{jena}. Indeed, a crucial step in the analysis in \cite{jena} involves defining a new function $z$ as follows
\begin{align*}
z(t_1,t_2,x) := \int_{t_1}^{t_2} \phi(s, x) ds,
\end{align*}
which allows one to bypass the regularity issue because $z$ has better regularity than $\phi$. Then one uses a suitable $H^1$ Carleman estimate for $z$, which is easier to obtain than directly deriving a $L^2$ estimate for $\phi$. Transforming $z$ back to $\phi$ leads to the required observability inequality for $\phi$. However, for time dependent domains the function \(z\) now turns out to be
\begin{align*}
z(t_1,t_2,x) := \int_{t_1}^{t_2} \phi(s, x(s)) ds,
\end{align*}
where the \(x(s)\) inside the integral represents the fact that the domain is now time dependent. This $x(s)$ factor creates several new terms in the \emph{wave-type} equation for $z$, when we differentiate $z$. These terms cannot be controlled properly as the expressions become too complicated, and it causes serious issues in the required analysis. This seems to suggest that this might not be the optimal method for solving the observability problem in the time dependent case. To show observability for this problem, we need more knowledge of wave equations in general geometries. 

However, on time dependent domains, observability can be proved for the adjoint system with initial data in $H^1_0 \times L^2$, by directly using a suitable $H^1$-Carleman estimate. That is, the control problem for waves with initial data in $L^2\times H^{-1}$ is solvable, which is what we consider in this article. Roughly speaking, the analysis in the current work is a simpler version of the proof of \cite{jena}, but applied to the more general setting of time dependent domains.

\subsection{Main result}
To present the main result of this chapter, we give the following definition.
\begin{definition}
For given $(t_0, x_0) \in \R^{1+n}$, define $f_0$ and $\mc{D}_0$ as follows
\begin{equation} \label{eq_def_f0_MB}
f_0 := \frac{1}{4} [ |x-x_0|^2 - (t-t_0)^2 ], \qquad \mc{D}_0 := \{ f_0 > 0 \}.
\end{equation}
\end{definition}
The $f_0$ defined above can be seen as a time dependent version of the usual distance functions in standard Carleman results; see \cite{Carleman, Zhang}. Here, $\mc{D}_0$ is the exterior of the null cone centred at $(t_0,x_0)$.

\begin{definition}
Define the set $\Gamma'$ 
\begin{equation} \label{eq_gamma'_MB} 
\Gamma' := \partial\mc{U}_{\tau_-,\tau_+} \cap \mc{D}_0 \cap \{ \mc{N} f_0 > 0 \},
\end{equation}
where $\mc{N}$ denotes the Minkowski outward unit normal of $\mc{U}$.
Then, using \eqref{eq_def_U_1_MB} we get the following
\begin{equation}
\Gamma' = \left[ \bigcup_{\tau \in (\tau_-,\tau_+)} \left(\{\tau\} \times \partial \Omega_\tau \right) \right] \cap \mc{D}_0 \cap \{ \mc{N} f_0 > 0 \}.
\end{equation}
Now if $(\tau,y) \in \Gamma'$, then
\begin{align*}
(\tau,y) \in \{ \tau \} \times \partial \Omega_\tau, \qquad f_0(\tau,y) > 0, \qquad \mc{N}f_0(\tau,y)>0.
\end{align*}
Then for $\sigma>0$, we define the following $\sigma$-neighbourhoods
\begin{align} \label{eq_def_Osig_gam_MB}
\mc{O}_\sigma(y) & := \{ y_1 \in \R^n : |y_1-y| < \sigma\} \subset \R^n, \\
\mc{O}_\sigma(\Gamma') & := \bigcup_{(\tau,y) \in \Gamma'} \Big(  \{ \tau \} \times \mc{O}_\sigma(y) \Big) \subset \R^{1+n}. \notag
\end{align}
Let $W'$ be a neighbourhood of $\overline{\mc{O}_\sigma(\Gamma')}$ in $\mc{U}$, that is, $W' \subset \mc{U}$ such that
\begin{equation} \label{eq_W'_MB}
W' \supset \overline{\mc{O}_\sigma(\Gamma')} \cap \mc{U}.
\end{equation}
\end{definition}
Due to Definition \ref{def_U_domain_MB}, we see that $\mc{O}_\sigma(\Gamma')$ varies smoothly along $\tau$. Moreover, note the difference between \eqref{eq_std_carl_rslt_MB} and \eqref{eq_gamma'_MB}. It is precisely this restriction of the considered boundary region to $\mc{D}_0$ that gives us improved control regions. Then, our main observability result is as follows.

\begin{theorem} \label{thm_obs_main_intro_MB}
Let $x_0 \in \R^n$ be fixed. Let $\tau_\pm \in \R$ be such that 
\begin{equation} \label{eq_tt_RR_MB}
\tau_+ - \tau_- > R_+ + R_-, \qquad R_\pm := \sup_{(\tau_\pm,y) \in \partial\mc{U}} |y-x_0|.
\end{equation}
Let $t_0 \in (\tau_-,\tau_+)$ be chosen such that
\begin{equation} \label{eq_t0_MB}
t_0 - \tau_- > R_-, \qquad \tau_+ - t_0 > R_+.
\end{equation}
Furthermore, let $\Gamma'$ and $W'$ be defined as in \eqref{eq_gamma'_MB} and \eqref{eq_W'_MB}, respectively. Then, for some $C>0$ the following is satisfied
\begin{equation} \label{eq_obs_main_0_MB}
\int_{\mc{U} \cap \{t=\tau_\pm\}} ( |\nb_{t,x} \phi|^2 + \phi^2 ) \leqslant C \int_{W'} ( |\partial_t \phi|^2 + \phi^2) \text{,}
\end{equation}
for any solution $\phi \in C^2(\mc{U}) \cap C^1(\bar{\mc{U}})$ of \eqref{eq.intro_obs_MB} satisfying $\phi|_{\partial \mc{U}_{\tau_-,\tau_+} \cap \mc{D}_0}=0$.
\end{theorem}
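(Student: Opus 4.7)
The plan is to derive \eqref{eq_obs_main_0_MB} from a weighted $H^1$-Carleman estimate for the wave operator $-\partial_{tt}^2 + \Delta_x$, applied on the subdomain $\mathcal{U}_{\tau_-,\tau_+} \cap \mathcal{D}_0$ where $f_0 > 0$. The Carleman weight will be built from $f_0$ so that it degenerates on the null cone $\partial\mathcal{D}_0 = \{f_0 = 0\}$, and the boundary terms on $\partial\mathcal{U}_{\tau_-,\tau_+}$ come with the signed factor $\mathcal{N}f_0$. Conditions \eqref{eq_tt_RR_MB} and \eqref{eq_t0_MB} are precisely what force the slices $\mathcal{U}_{\tau_\pm}$ to sit strictly inside $\mathcal{D}_0$, so that the weight is uniformly positive there.

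First I would invoke the geometric Carleman estimate of \emph{Shao} \cite{Arick} for $-\partial_{tt}^2 + \Delta_x$ on $\mathcal{U}_{\tau_-,\tau_+} \cap \mathcal{D}_0$ with weight $e^{a\varphi(f_0)}$ and $a \gg 1$, producing schematically
\begin{equation*}
a \int_{\mathcal{U}_{\tau_-,\tau_+} \cap \mathcal{D}_0} e^{2a\varphi}\bigl( |\nabla_{t,x}\phi|^2 + \phi^2 \bigr) \lesssim \int_{\mathcal{U}_{\tau_-,\tau_+} \cap \mathcal{D}_0} e^{2a\varphi} |(-\partial_{tt}^2 + \Delta_x)\phi|^2 + \mathcal{B},
\end{equation*}
where $\mathcal{B}$ collects boundary contributions on $\partial\mathcal{U}_{\tau_-,\tau_+} \cap \mathcal{D}_0$ proportional to $\mathcal{N}f_0 \cdot |\mathcal{N}\phi|^2$; the tangential and zeroth-order boundary terms vanish since $\phi = 0$ on this set. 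Because $(-\partial_{tt}^2 + \Delta_x)\phi = -\nabla_\mathcal{X}\phi - V\phi$ involves only first- and zeroth-order terms in $\phi$, for $a$ large enough the bulk error on the right is absorbed into the LHS. The piece of $\mathcal{B}$ over the ``good'' portion $\{\mathcal{N}f_0 \le 0\}$ has the favorable sign and is discarded, leaving a bad contribution supported on $\Gamma'$.

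The main step, adapting the technique of \cite{jena}, is to trade this $\Gamma'$-boundary term for an interior term supported in $W'$ that involves only $|\partial_t\phi|^2$ and $\phi^2$. I would do this via a cutoff $\chi$ with $\chi \equiv 1$ near $\Gamma'$ and $\mathrm{supp}\,\chi \subset W'$: rerunning the Carleman calculation on $(1-\chi)\phi$ eliminates the $\Gamma'$-boundary term at the price of introducing an interior commutator $[-\partial_{tt}^2 + \Delta_x,\,\chi]\phi$, which is first-order in $\phi$ and supported where $\nabla\chi \ne 0$, i.e.\ inside $W'$. An integration by parts on $\int_{W'} |\nabla_{t,x}\phi|^2$, together with the PDE \eqref{eq.intro_obs_MB} which allows one to swap $\Delta_x\phi$ for $\partial_{tt}^2\phi$ modulo lower order terms, then converts the spatial derivative part into $\partial_t\phi$ and $\phi$, producing the desired RHS $\int_{W'}(|\partial_t\phi|^2 + \phi^2)$.

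Finally, I would strip the weights using the uniform positivity of $\varphi$ on $\mathcal{U}_{\tau_\pm}$ and combine with the standard energy estimate for the adjoint system \eqref{eq.intro_obs_MB} to transfer the interior $H^1$-bound to the slice norms at $t = \tau_\pm$, yielding \eqref{eq_obs_main_0_MB}. The main obstacle is the derivative-reduction step: one must verify carefully that the commutator terms near $\Gamma'$, which a priori contain the full $|\nabla_{t,x}\phi|^2$, can indeed be absorbed by $\int_{W'}(|\partial_t\phi|^2 + \phi^2)$, and that this works uniformly despite $\partial\mathcal{U}$ moving in time. The smoothness of $\tau \mapsto \mathcal{O}_\sigma(\Gamma')$ granted by Definition \ref{def_U_domain_MB} is what ultimately makes this reduction go through.
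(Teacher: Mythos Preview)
Your overall plan---Carleman estimate from \cite{Arick}, absorb lower-order terms, trade the $\Gamma'$ boundary term for a $W'$ interior term via a cutoff, reduce $|\nabla_x\phi|^2$ to $|\partial_t\phi|^2+\phi^2$ by integration by parts and the equation---matches the paper's strategy (Sections~\ref{ch4_sec_int_carl}--\ref{ch4_sec_obs}). However, there is a genuine geometric gap in your final two steps.

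You claim that conditions \eqref{eq_tt_RR_MB}--\eqref{eq_t0_MB} force $\mc{U}_{\tau_\pm}\subset\mc{D}_0$, so that the weight is uniformly positive there. This is backwards: since $t_0-\tau_->R_-$ and $|y-x_0|\le R_-$ for $y\in\bar\Omega_{\tau_-}$, one has $f_0(\tau_-,y)=\tfrac14(|y-x_0|^2-(t_0-\tau_-)^2)<0$, so $\mc{U}_{\tau_-}$ lies \emph{inside} the null cone, and similarly for $\mc{U}_{\tau_+}$. The Carleman weight vanishes identically on these slices and gives you nothing there. What the paper does instead is extract a lower bound on the \emph{central} slice $\mc{U}_{t_0}$ (where $f_0(t_0,x)=\tfrac14|x-x_0|^2\ge 0$), and then propagate to $\tau_\pm$ by the plain energy estimate (Proposition~\ref{thm.energy_est_1_MB}). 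Your sketch conflates these two steps.

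Once you correct to $\mc{U}_{t_0}$, a second issue appears which you do not address: if $x_0\in\Omega_{t_0}$ (i.e.\ the centre $p=(t_0,x_0)$ lies in $\mc{U}$), then the weight still degenerates at the single point $x=x_0$ on $\mc{U}_{t_0}$, and the Carleman estimate does not control $|\nabla_{t,x}\phi|^2$ in a neighbourhood of that point. The paper resolves this (Theorem~\ref{thm_obs_2_MB}) by applying the interior Carleman estimate about \emph{two} nearby centres $p_1,p_2\in\mc{U}$ with $t(p_1)=t(p_2)=t_0$ and summing; the union of the regions where each weight is uniformly bounded below covers all of $\mc{U}_{t_0}$. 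Your proposal, which uses a single centre, would fail precisely in the case $p\in\mc{U}$ that Theorem~\ref{thm_obs_main_intro_MB} explicitly allows. A related subtlety you pass over is that the first-order bulk term $\int \zeta f_p|\nabla_{\mc{X}}\phi|^2$ carries a different weight from the gradient terms on the left, so absorbing it is not automatic; the paper handles this by splitting $\mc{U}\cap\mc{D}_p$ into $\mc{U}_>$ and $\mc{U}_\le$ (see \eqref{eq_obs_pf_3_MB}) and using the second energy estimate, Proposition~\ref{thm_energy_est_2_MB}, on $\mc{U}_\le$.
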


\begin{remark}
Since wave equation has finite speed of propagation, one must wait sufficient time so that effects of the control can reach all the points in the domain. Assumption \eqref{eq_tt_RR_MB} takes care of this fact.
\end{remark}

\begin{remark}
Note that \eqref{eq_tt_RR_MB} implies the existence of a $t_0$ satisfying \eqref{eq_t0_MB}. Moreover, the regions $\Gamma'$ and $W'$ depend on $f_0$, which in turn depends on $t_0$.
\end{remark}

\begin{remark} \label{ch4_rmk_general_xt}
A crucial point here is that in the case of static domains (see \eqref{eq_std_carl_rslt_MB}) we have the requirement $(x-x_0)\cdot \nu >0$. But this is now replaced with the condition $\mc{N} f_0 >0$, or equivalently
\[ (x-x_0)\cdot\nu - (t-t_0)\cdot \nu^t >0, \]
where $\mc{N} := (\nu^t,\nu)$. Thus, $\mc{N} f_0 >0$ can be seen as a time dependent domain analogue of the condition $(x-x_0)\cdot \nu >0$. 

Moreover, in the time static case, for the observability estimate one is only concerned with the observation point $x_0$ in the spatial domain $\R^n$. However, due to the time dependent nature of the geometry, now we consider the observation point to be $(t_0,x_0)$ in the spacetime domain $\R^{1+n}$. Furthermore, it is exactly because of this reason that we now work with $f_0$ defined as in \eqref{eq_def_f0_MB}. For a detailed discussion on this see \cite{Arick}.
\end{remark}

\begin{remark}
In the above result we assumed that the coefficients $V, \mc{X}$ are smooth. However, the regularity of the coefficients can be lowered using standard analytic arguments. As regularity is not a primary interest in this article, we avoid this discussion here.
\end{remark}

Now, restricting the observation region close to the exterior region $\mc{D}_0$ is a major feature of our result. In the static case (see \eqref{eq_std_carl_rslt_MB}), the corresponding set $\Gamma_+$ is only space dependent and taking a $\sigma$-neighbourhood in the spatial direction is enough to obtain observability. However, in the current time dependent setting, we have to be careful while taking the $\sigma$-neighbourhood because $\Gamma'$ from \eqref{eq_gamma'_MB} is spacetime dependent. For this purpose, we consider each time-slice of $\Gamma'$ and then take spatial neighbourhoods of each of these slices. Taking union of these individual neighbourhoods over $\tau$, gives us the required region $\mc{O}_\sigma(\Gamma')$ as in \eqref{eq_def_Osig_gam_MB}. Now, although our actual observation region is given by $W'$ from \eqref{eq_W'_MB}, the difference between $W'$ and $\mc{O}_{\sigma,f} (\Gamma') \cap (\mc{U}_{\tau_-,\tau_+} \cap \mc{D}_0)$ can be made arbitrarily small. This allows us to obtain the best known Carleman type result for interior control of waves on time dependent domains. Using standard Carleman methods, one would expect the control region to be $\mc{O}_{\sigma,f} (\Gamma') \cap (\mc{U}_{\tau_-,\tau_+})$.

Finally, using the standard duality argument, a corollary to Theorem \ref{thm_obs_main_intro_MB} is the following controllability result.
\begin{theorem} \label{thm_control_MB}
Consider the system \eqref{eq.intro_wave_MB}. Assume that the hypothesis of Theorem \ref{thm_obs_main_intro_MB} holds and define $W' \subset \mc{U}_{\tau_-,\tau_+}$ to be any open set satisfying 
\begin{equation}
W' \supset \overline{\mc{O}_\sigma (\Gamma') \cap (\mc{U}_{\tau_-,\tau_+} \cap \mc{D}_0)}.
\end{equation}
Then, given any $(y_0^\pm,y_1^\pm) \in L^2(\mc{U}_{\tau_\pm}) \times H^{-1} (\mc{U}_{\tau_\pm})$, there exists a control function\footnote{Here $(H^1(W'))^*$ denotes the dual space of $H^1(W')$.} $ F \in (H^1(W'))^*$ such that the solution $y$ of \eqref{eq.intro_wave_MB} satisfies
\[ (y,\partial_t y) = (y_0^+ ,y_1^+), \qquad \text{on } \mc{U}_{\tau_+}. \]
\end{theorem}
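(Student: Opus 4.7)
The plan is to deduce Theorem \ref{thm_control_MB} from the observability inequality \eqref{eq_obs_main_0_MB} by the Hilbert Uniqueness Method (HUM) of \cite{lionj:control_hum}, in the transposition (very weak) formulation that is forced by the fact that the data $(y_0^\pm,y_1^\pm)$ lie only in $L^2\times H^{-1}$ and the control $F$ only in $(H^1(W'))^\ast$.

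First I would derive a duality identity: for smooth data, multiplying \eqref{eq.intro_wave_MB} by a smooth adjoint solution $\phi$ of \eqref{eq.intro_obs_MB} and integrating by parts over $\mc{U}_{\tau_-,\tau_+}$ gives, using that both $y$ and $\phi$ vanish on $\partial\mc{U}_{\tau_-,\tau_+}$,
\[
\int_{W'} F\,\phi\, dt\,dx \;=\; \bigl\langle (y_0^+,y_1^+),\,(\phi(\tau_+),\partial_t\phi(\tau_+))\bigr\rangle \;-\; \bigl\langle (y_0^-,y_1^-),\,(\phi_0,\phi_1)\bigr\rangle,
\]
with the pairings being the duality between $L^2\times H^{-1}$ and $H^1_0\times L^2$ on the time-slices $\mc{U}_{\tau_\pm}$ (the $\mc{X}$ and $q$ contributions being absorbed into $V$ via the standard adjoint computation). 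Taking this identity as the definition of $y$ produces, by transposition, a unique very weak solution $y\in C([\tau_-,\tau_+];L^2(\mc{U}_t))\cap C^1([\tau_-,\tau_+];H^{-1}(\mc{U}_t))$ for every $(y_0^-,y_1^-)\in L^2\times H^{-1}$ and every $F\in(H^1(W'))^\ast$. The required continuity of $(\phi_0,\phi_1)\mapsto(\phi,\partial_t\phi)$ from $H^1_0\times L^2$ into $C([\tau_-,\tau_+];H^1_0\times L^2)$ follows from a standard energy estimate for \eqref{eq.intro_obs_MB} on the time-dependent domain, Gr\"onwall absorbing the smooth lower order terms.

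Next I would introduce the HUM bilinear form
\[
a\bigl((\phi_0,\phi_1),(\psi_0,\psi_1)\bigr) \;:=\; \int_{W'}\bigl(\partial_t\phi\,\partial_t\psi + \phi\,\psi\bigr)\, dt\, dx
\]
on $H^1_0(\mc{U}_{\tau_-})\times L^2(\mc{U}_{\tau_-})$, with $\phi,\psi$ the corresponding adjoint solutions. Continuity of $a$ is immediate from the same energy estimate, and coercivity is precisely Theorem \ref{thm_obs_main_intro_MB} at $t=\tau_-$. Lax-Milgram then produces, for each prescribed target $(y_0^+,y_1^+)$, a unique $(\phi_0,\phi_1)\in H^1_0\times L^2$ whose adjoint trajectory reproduces the required duality identity (where one first subtracts off the time-$\tau_+$ data of the uncontrolled trajectory starting from $(y_0^-,y_1^-)$); the control $F\in(H^1(W'))^\ast$ is then read off as the functional $\langle F,\cdot\rangle := \int_{W'}(\partial_t\phi\,\partial_t(\cdot) + \phi\,(\cdot))\,dt\,dx$, and by construction the transposition solution of \eqref{eq.intro_wave_MB} with this $F$ and data $(y_0^-,y_1^-)$ satisfies $(y,\partial_t y)(\tau_+)=(y_0^+,y_1^+)$.

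The main obstacle I expect is making Step 1 rigorous on the moving-boundary geometry: unlike the static cylinder case, each time-slice $\mc{U}_\tau$ is a genuinely different Euclidean domain, so one must carefully check that the integration by parts producing the duality identity yields only the advertised time-slice pairings and no spurious contributions from the moving lateral boundary $\partial\mc{U}_{\tau_-,\tau_+}$, and that the continuity and coercivity estimates for $a$ hold uniformly in time. The timelike vector field $\mc{Z}$ from Definition \ref{def_U_domain_MB}, which is tangent to $\partial\mc{U}$, serves as the geometric multiplier needed for those energy estimates; this is exactly where the moving-boundary geometry enters. Once these ingredients are in place, the remainder is a routine HUM argument following \cite{lionj:control_hum, dolec_russe:obs_control}.
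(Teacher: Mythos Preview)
Your proposal is correct and is precisely the ``standard duality argument'' that the paper invokes: the paper does not give a detailed proof of Theorem~\ref{thm_control_MB} at all, but simply states it as a corollary of the observability inequality \eqref{eq_obs_main_0_MB} via HUM, citing \cite{lionj:control_hum, dolec_russe:obs_control}. Your write-up fleshes out exactly this route---duality identity, HUM bilinear form with coercivity supplied by \eqref{eq_obs_main_0_MB}, Lax--Milgram---and correctly flags the moving-boundary energy estimates (handled in the paper via Proposition~\ref{thm.energy_est_1_MB} and the vector field $\mc{Z}$) as the only nonstandard ingredient.
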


\subsection{Key features} \label{ssec_key_feat_MB}
The result presented in this article has the following features:
\begin{enumerate}

\item We obtain a novel control result for wave equations with time dependent lower order coefficients, on time dependent domains. We do not assume any time analyticity for these coefficients.

\item Our control region is significantly improved compared to standard Carleman based results. The control region we obtain is restricted to a neighbourhood of $\mc{O}_\sigma (\Gamma') \cap (\mc{U}_{\tau_-,\tau_+} \cap \mc{D}_0)$. Usual Carleman methods would lead to control regions of the type $\mc{O}_\sigma (\Gamma') \cap \mc{U}_{\tau_-,\tau_+}$.

\item We show controllability when the point $(t_0,x_0) \in \R^{1+n}$. Usually Carleman results require that $(t_0,x_0) \notin \bar{\mc{U}}$ (see for example, \cite{Carleman, FYZ, lasie_trigg_zhang:wave_global_uc, zhang:obs_wave_lower} where time independent domains are considered).

\end{enumerate}
Feature (3) is also present in---\cite{Arick} for time dependent case and \cite{fu_liao, jena, LZZ} for time static case.

\subsection{Outline}
The rest of the chapter is divided as follows.
\begin{itemize}

\item In Section \ref{sec_geo_set}, we present the details of the geometric set-up for the control problem.

\item In Section \ref{ch4_sec_int_carl}, we prove an interior Carleman estimate. For this purpose, we use the boundary Carleman estimate result of \cite{Arick}. We estimate the boundary integral term by an interior integral term using a suitable vector field and some cut-off functions.

\item In Section \ref{ch4_sec_obs}, we use the interior Carleman estimate along with certain energy results to complete the proof. We also adapt some techniques from \cite{Arick}, where the author proves boundary controllability of waves on similar time-dependent domains. The major difference is that now we achieve observability with an interior term in the Carleman estimate, instead of a boundary term.
\end{itemize}

\section{Geometric setting} \label{sec_geo_set}

In this section, we describe the geometric background for the results of this article.

\begin{definition}\label{def_setting}
Let \(n \in \N\) be fixed. On \( \R^{1+n} \), we define the following
\begin{itemize}

\item Denote by \( t \) and \( x = (x_1, \ldots,  x_n)\) the \textbf{Cartesian coordinates} on \( \R^{1+n} \). Here, \(t\) maps to the first component of \(\R^{1+n}\) and \(x\) maps to the remaining \(n\) components of \(\R^{1+n}\).

\item Let \(g\) denote the Minkowski metric on \(\R^{1+n}\), defined as 
\begin{equation}\label{eq.gmetric}
g = -dt^2 + dx_1^2 + \cdots + dx_n^2\text{.}
\end{equation}

\item Let \(r := |x| \) denote the \textbf{spatial radial function}, that is, we have
\[ r := \sqrt{(x_1)^2+ \ldots + (x_n)^2}. \] 

\item We define the \textbf{null coordinates} as follows
\begin{equation} \label{eq.uv}
u:= \frac{1}{2}(t - r) \text{,} \qquad v:= \frac{1}{2} (t + r) \text{.}
\end{equation}
The inverse mapping is given by
\begin{equation}\label{eq.rt}
t = v + u \text{,} \qquad r = v - u \text{.}
\end{equation}

\item Now, define the function \(f\) as
\begin{equation}\label{eq.f}
 f := -uv = \frac{1}{4}(r^2 - t^2) = \frac{1}{4} \left(|x|^2-t^2\right) \text{.}
\end{equation}
Note that, the level sets for each value of $f$ look like-
\begin{itemize}
\item for $\{f<0\}$, they are two sheeted hyperboloids on $\R^{1+n}$,
\item for $\{f=0\}$, it forms the null cone centred at the origin,
\item for $\{f>0\}$, they are one sheeted hyperboloids on $\R^{1+n}$.
\end{itemize}
\end{itemize}

\end{definition}

The function \(f\) is one of the crucial requirements for obtaining the Carleman estimate. In particular, it is used to define the Carleman weight. It also helps us to localise the Carleman estimate to the exterior of the null cone.

We also present some equivalent representations of \(g\) in other coordinate systems. Let \(\mathbb{S}^{n-1}\) denote the unit sphere in \(\R^n\), that is 
\[ \mathbb{S}^{n-1} = \{ y \in \R^n : |y| =1\}. \]
Then, on \( \{r \neq 0\}\) standard polar coordinates are given by \( (r,t,\omega) \), where \(\omega\) is the angular coordinate with values in \(\mathbb{S}^{n-1}\). And, the null coordinates are given by \(( u, v, \omega )\). The coordinate vector fields with respect to these coordinate systems are denoted by \( \partial_t, \partial_r, \partial_u, \partial_v \). Next, let \(\mathring{\gamma}_{\mathbb{S}^{n-1}}\) denote the unit round metric on \(\mathbb{S}^{n-1}\).

We also define the shifted coordinate system with respect to a fixed point $p \in \R^{1+n}$.
\begin{definition}
Let $p \in \R^{1+n}$ be fixed. Let $t(p)$ and $x(p)$ denote the coordinates of $p$ with respect to the usual Cartesian coordinate system centred at $(0,0) \in \R^{1+n}$. Then, we define the following notations
\begin{align*}
t_p := t - t(p), & \qquad x_p := x - x(p), \\
r_p := |x_p|, \qquad u_p := \frac{1}{2} (t_p - r_p), & \qquad v_p := \frac{1}{2} (t_p + r_p), \qquad f_p := - u_p v_p, \\
\end{align*}
\end{definition}

Then, we can write \(g\) as follows
\begin{equation}\label{eq.psr_met}
g = -dt_p^2 + dr_p^2 + r_p^2 \mathring{\gamma}_{\mathbb{S}^{n-1}} = -4 du_p dv_p + r_p^2 \mathring{\gamma} \text{,}
\end{equation}
where $dt_p,\ dr_p,\ du_p,\ dv_p$ are defined with respect to the shifted coordinate system centred at $p$.

There are a few coordinate systems at work here. Hence, we define some convention to specify the coordinates we use at any step.
\begin{definition}\label{def_coord}
Let us define the following convention:
\begin{itemize}
\item \( (\alpha, \beta, \ldots) :\) Lower case Greek letters, ranging from 0 to \(n\), denote space-time components in \(\R^{1+n}\).
\item \( (a,b,\ldots) :\) Lower case Latin letters, ranging from 1 to \(n-1\), denote angular components corresponding to \(\omega_x \in \mathbb{S}^{n-1} \) in the above mentioned coordinate systems.
\end{itemize}
\end{definition}

\begin{definition}\label{def.covder}
We define the following notations for operators on \( (\R^{1+n},g) \):
\begin{itemize}
\item \( \nabla \) denotes the Levi-Civita connection with respect to \(g\).
\item \(  \square  := g^{\alpha\beta}  \nabla_{\alpha\beta} \) denotes the wave operator with respect to  \(g\).
\item \(  \nasla  \) denotes the derivatives in the angular components with respect to \(g\).
\end{itemize}
\end{definition}

\begin{definition}
Define \( \mc{D} \subset \R^{1+n} \) as follows
\begin{equation}\label{eq.D}
\mc{D} := \{ f > 0 \} \text{.}
\end{equation}
\end{definition}
Note that, $\mc{D}$ is the region exterior to the null cone centred at the origin; see the discussion following equation \eqref{eq.f}. Analogously, we also define the region $\mc{D}_p$ as follows
\begin{equation}\label{eq.D_p}
\mc{D}_p := \{ f_p > 0 \} \subset \R^{1+n} \text{.}
\end{equation}
The region $\mc{D}_p$ plays a crucial role in the main Carleman estimate, as well as the subsequent observability result.

\section{Carleman estimate} \label{ch4_sec_int_carl}
We will now state the main Carleman estimate of \cite{Arick}, which obtained a boundary control result for wave equations on time dependent domains; we refer the reader to the corresponding article for the proof.

\begin{theorem}[Boundary Estimate]\label{thm.carl_bdry_MB}
Let $\mc{U}$ be defined as in Definition \ref{def_U_domain_MB}. Fix $p \in \R^{1+n}$. Now, assume that there exists \( R>0 \) such that
\begin{equation}\label{eq.carleman_domain_MB}
\mc{U} \cap \mc{D}_p \subseteq \{ r_p<R \}.
\end{equation}
Choose constants \( \varepsilon, a, b >0 \) satisfying
\begin{equation}\label{eq.carleman_choices_MB}
a \geqslant n^2, \qquad \varepsilon \ll_n b \ll R^{-1}\text{.}
\end{equation}
Then, there exists a constant \( C >0 \), such that for any \(\phi \in C^2({\mc{U}}) \cap C^1(\bar{\mc{U}}) \) satisfying
\begin{equation}\label{eq.carleman_dirichlet_MB}
\phi |_{\partial\mc{U} \cap \mc{D}_p} = 0 \text{,}
\end{equation}
we have the following estimate
\begin{align}\label{eq.carleman_est_MB}
\notag & C \varepsilon \int_{\mc{U} \cap \mc{D}_p} \zeta_{a,b;\varepsilon}^p r_p^{-1}(|u_p \partial_{u_p} \phi|^2 + |v_p \partial_{v_p} \phi|^2 + f_p g^{ab}\slashed\nabla_a^p \phi \slashed\nabla_b^p \phi ) + Cba^2\int_{\mc{U}\cap \mc{D}_p}\zeta_{a,b;\varepsilon}^p f_p^{-\frac{1}{2}} \phi^2 \\
& \qquad \leqslant \frac{1}{a}\int_{\mc{U}\cap \mc{D}_p} \zeta_{a,b;\varepsilon}^p f_p |\square \phi|^2 +  C' \int_{\partial\mc{U}\cap \mc{D}_p}\zeta_{a,b;\varepsilon}^p [( 1 - \varepsilon r_p ) \mc{N} f_p + \varepsilon f_p \mc{N} r_p ] |\mathcal{N} \phi|^2 \text{,} 
\end{align}
where \( \zeta_{a,b;\varepsilon}^p \) is defined as 
\begin{equation}
\label{eq.carleman_weight_MB} \zeta_{ a, b; \varepsilon }^p := \left\{ \frac{ f_p }{ ( 1 + \varepsilon u_p ) ( 1 - \varepsilon v_p ) } \cdot \exp \left[ \frac{ 2 b f_p^\frac{1}{2} }{ ( 1 + \varepsilon u_p )^\frac{1}{2} ( 1 - \varepsilon v_p )^\frac{1}{2} } \right] \right\}^{2a} \text{,}
\end{equation}
and \(\mc{N}\) is the outward pointing unit normal of \(\mc{U}\) with respect to \(g\).
\end{theorem}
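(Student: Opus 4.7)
The plan is to follow the classical geometric Carleman strategy: conjugate the wave operator by the square root of the weight, pair with a vector-field multiplier adapted to the null geometry, and extract the estimate from the resulting divergence identity. I would set $\chi := (\zeta_{a,b;\varepsilon}^p)^{1/2}$ and $\psi := \chi \phi$, so that a direct computation yields
\begin{equation*}
\chi \, \square \phi = \square \psi - 2 \nb^\alpha(\log \chi) \nb_\alpha \psi + (\chi^{-1} \square \chi)\psi.
\end{equation*}
The inequality then follows from a pointwise identity $(\chi \square \phi) \cdot M\psi = \nb_\alpha \mc{P}^\alpha[\psi] + \mc{Q}[\psi]$, where $M\psi := \nb_{\mc{S}} \psi + c \psi$ is a first-order multiplier built from the Minkowski-conformal field $\mc{S} := u_p \partial_{u_p} + v_p \partial_{v_p}$. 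The latter satisfies $\mc{S} f_p = 2 f_p$, which is the pseudoconvexity mechanism driving the positivity.

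Then I would expand $(\chi \square \phi) \cdot M\psi$ in the shifted null frame $(\partial_{u_p}, \partial_{v_p}, \slashed\nabla^p)$ to extract $\mc{Q}[\psi]$. The coercive bulk should decompose into two pieces: a first-order piece with coefficient of order $\varepsilon$ carrying $r_p^{-1}(|u_p \partial_{u_p}\phi|^2 + |v_p \partial_{v_p}\phi|^2 + f_p g^{ab} \slashed\nabla_a^p \phi \, \slashed\nabla_b^p \phi)$, coming from differentiating the rational piece $f_p/[(1+\varepsilon u_p)(1-\varepsilon v_p)]$ of the weight; and a zeroth-order piece of order $b a^2 f_p^{-1/2}$, coming from the second derivatives of the exponential factor in $\zeta_{a,b;\varepsilon}^p$. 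The parameter hierarchy $a \geqslant n^2$ and $\varepsilon \ll_n b \ll R^{-1}$ imposed in \eqref{eq.carleman_choices_MB} is precisely what is needed for these two positive contributions to absorb all competing sign-indefinite error terms on $\mc{U} \cap \mc{D}_p$.

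Integrating the resulting identity over $\mc{U} \cap \mc{D}_p$ and applying the divergence theorem produces boundary integrals on $(\partial \mc{U} \cap \mc{D}_p) \cup (\mc{U} \cap \{f_p = 0\})$. On the null cone $\{f_p = 0\}$ the weight vanishes as $f_p^{2a}$, so that piece contributes nothing, and the standing assumption \eqref{eq.carleman_domain_MB} ensures $\mc{U} \cap \mc{D}_p$ is confined to $\{r_p < R\}$ so no contribution comes from spatial infinity. On $\partial \mc{U} \cap \mc{D}_p$, the Dirichlet condition \eqref{eq.carleman_dirichlet_MB} forces $\nb \phi = (\mc{N}\phi)\mc{N}$, and an explicit Minkowski computation of the boundary integrand collapses it to the claimed $\zeta_{a,b;\varepsilon}^p[(1-\varepsilon r_p)\mc{N} f_p + \varepsilon f_p \mc{N} r_p]|\mc{N}\phi|^2$.

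The main obstacle is the pointwise positivity of $\mc{Q}[\psi]$. One must carefully track every cross term produced by $M$ acting on the conjugated equation in each of the three null directions $\partial_{u_p}$, $\partial_{v_p}$, $\slashed\nabla^p$, and show that the indefinite-sign terms are absorbable by the $\varepsilon$- and $b a^2$-weighted positive pieces once the parameter hierarchy is imposed. This is a substantial algebraic calculation in the conformal null frame, and it is also the step at which the timelike character of $\partial \mc{U}$ and the tangency of $\mc{Z}$ (from Definition \ref{def_U_domain_MB}) enter, ensuring that the multiplier $\mc{S}$ is geometrically compatible with the moving boundary.
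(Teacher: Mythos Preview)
The paper does not actually prove this theorem: immediately before the statement it says ``We will now state the main Carleman estimate of \cite{Arick} \ldots\ we refer the reader to the corresponding article for the proof.'' So there is no in-paper proof to compare against; Theorem~\ref{thm.carl_bdry_MB} is quoted as a black box from Shao's work and then used as input to the interior estimate (Theorem~\ref{thm.carl_int_MB}).

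That said, your sketch is a faithful outline of how the cited result is obtained in \cite{Arick}: conjugation by $\zeta^{1/2}$, a multiplier built from the dilation field $\mc{S} = u_p\partial_{u_p} + v_p\partial_{v_p}$ (plus a zeroth-order correction), a divergence identity integrated over $\mc{U}\cap\mc{D}_p$, vanishing of the $\{f_p=0\}$ contribution because $\zeta \sim f_p^{2a}$, and collapse of the $\partial\mc{U}\cap\mc{D}_p$ flux to the stated form via the Dirichlet condition. One small overreach: the tangency of the vector field $\mc{Z}$ from Definition~\ref{def_U_domain_MB} plays no role in the Carleman identity itself---it is used elsewhere in the paper to parametrize $\mc{U}$ as a cylinder. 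What matters for the boundary term here is only that $\partial\mc{U}$ is timelike, so that $\mc{N}$ is spacelike and $|\mc{N}\phi|^2$ has the right sign. The genuinely delicate step, as you correctly flag, is the pointwise positivity of the bulk $\mc{Q}[\psi]$ under the hierarchy \eqref{eq.carleman_choices_MB}; that computation occupies the bulk of \cite{Arick} and is not reproduced here.
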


\begin{remark}
The work in \cite{jena} generalised the above result to obtain a Carleman estimate for ultrahyperbolic operators, that is, the case when $(t,x) \in \R_t^m \times \R_x^n$.
\end{remark}
Since we are looking to solve the interior control problem, we need to derive an interior Carleman estimate. For this purpose, we give the following definition.
\begin{definition} \label{def_Gamma_MB}
Let us define $\Gamma_+$ as follows
\begin{equation} \label{eq_def_Gamma_MB}
\Gamma_+  := \partial \mc{U} \cap \mc{D}_p \cap \{ ( 1 - \varepsilon r_p ) \mc{N} f_p + \varepsilon f_p \mc{N} r_p > 0 \} \text{.}
\end{equation}
\end{definition}

Now we define $\mc{O}_\sigma(\Gamma_+)$ using the same idea we used for defining $\mc{O}_\sigma(\Gamma')$ in \eqref{eq_def_Osig_gam_MB}.
\begin{definition} \label{def_omega_MB}
Let $\sigma>0$. Then, define the following $\sigma$-neighbourhoods
\begin{align} \label{eq_def_W1_MB}
\mc{O}_\sigma(y) & := \{ y_1 \in \R^n : |y_1-y| < \sigma\} \subset \R^n, \\
\mc{O}_\sigma(\Gamma_+) & := \bigcup_{(\tau,y) \in \Gamma_+} \Big(  \{ \tau \} \times \mc{O}_\sigma(y) \Big) \subset \R^{1+n}. \notag
\end{align}
Now, let $W \subset \mc{U} \cap \mc{D}_p$ be an open set defined as follows
\begin{equation} \label{eq_def_W2_MB}
W = \mc{O}_\sigma (\Gamma_+) \cap (\mc{U} \cap \mc{D}_p).
\end{equation}
\end{definition}

\begin{remark}
The $W$ defined above can be written as 
\begin{equation}
W = \bigcup_{\tau \in (\tau_-,\tau_+)} \left( \{ \tau \} \times \omega_\tau \right) \text{,}
\end{equation}
for some $\omega_\tau \subset \Omega_\tau $, for $\tau \in (\tau_-,\tau_+)$.
\end{remark}

With the above definitions, we have the following interior Carleman estimate.
\begin{theorem}[Interior Estimate] \label{thm.carl_int_MB}
Let \( \mc{U} \) be a defined as in Definition \ref{def_U_domain_MB}.
Fix \( R>0 \) such that it satisfies
\begin{equation}\label{eq.carl_int_domain_MB}
\bar{\mc{U}} \cap \mc{D}_p \subseteq \{ r_p < R \} \text{.}
\end{equation}
Choose constants \( \varepsilon, a, b >0 \) such that they satisfy
\begin{equation}\label{eq.carl_int_choices_MB}
a \geqslant n^2 \text{,} \qquad a \gg R \text{,} \qquad \varepsilon \ll_n b \ll R^{-1} \text{.}
\end{equation}
Furthermore, let $W$ be defined as in Definition \ref{def_omega_MB}.
Then, there exists a constant \(C>0\) such that for any \( \phi \in C^2({\mc{U}})\cap C^1(\bar{\mc{U}}) \) satisfying \(\phi|_{\partial\mc{U} \cap \mc{D}_p} = 0,\) the following estimate holds
\begin{align*}
C \varepsilon & \int_{\mc{U} \cap \mc{D}_p} \zeta_{a,b;\varepsilon}^p r_p^{-1}(|u_p \partial_{u_p} \phi |^2 + |v_p \partial_{v_p} \phi |^2 + f_p g^{ab}\slashed\nabla^p_a \phi \slashed\nabla^p_b \phi ) + Cba^2\int_{\mc{U} \cap \mc{D}_p}\zeta_{a,b;\varepsilon}^p f_p^{-\frac{1}{2}} \phi^2 \\
& \quad \leqslant \frac{1}{a} \int_{\mc{U}\cap \mc{D}_p} \zeta_{a,b;\varepsilon}^p f_p |\square \phi|^2 + a R^2 \int_W \zeta_{a,b;\varepsilon}^p f_p^{-1} |\partial_t \phi|^2 + a^4 R^4 \int_W \zeta_{a,b;\varepsilon}^p f_p^{-3} \phi^2 \text{.} \numberthis \label{eq.carl_int_MB}
\end{align*}
\end{theorem}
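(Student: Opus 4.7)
The plan is to start from the boundary Carleman estimate \eqref{eq.carleman_est_MB} and to replace its boundary integral by an interior integral supported in $W$; the remaining pieces of \eqref{eq.carleman_est_MB} directly provide the left-hand side and the $|\square\phi|^2$-term of \eqref{eq.carl_int_MB}. By the very definition of $\Gamma_+$ in \eqref{eq_def_Gamma_MB}, the integrand $\zeta_{a,b;\varepsilon}^p[(1-\varepsilon r_p)\mc{N}f_p+\varepsilon f_p\mc{N}r_p]|\mc{N}\phi|^2$ is non-positive on $\partial\mc{U}\cap\mc{D}_p\setminus\Gamma_+$, so that portion can be discarded, reducing the task to estimating
\[
B := \int_{\Gamma_+} \zeta_{a,b;\varepsilon}^p\bigl[(1-\varepsilon r_p)\mc{N}f_p+\varepsilon f_p\mc{N}r_p\bigr]|\mc{N}\phi|^2
\]
by the right-hand side of \eqref{eq.carl_int_MB}.

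I would then fix a smooth cutoff $\chi\in C^\infty(\bar{\mc{U}})$ that equals $1$ on a neighbourhood of $\overline{\Gamma_+}$ and whose support in $\mc{U}$ is contained in $W$ (possible thanks to Definition \ref{def_omega_MB}), together with a smooth extension $Y$ of the outward unit normal $\mc{N}$ from $\partial\mc{U}\cap\mc{D}_p$ into $\bar{\mc{U}}$. Since $\phi$ vanishes on $\partial\mc{U}\cap\mc{D}_p$, we have $\nabla\phi=(\mc{N}\phi)\mc{N}$ there; applying the standard energy-momentum multiplier identity for the wave operator with multiplier $\chi\zeta_{a,b;\varepsilon}^p\Psi\, Y\phi$ (writing $\Psi:=(1-\varepsilon r_p)\mc{N}f_p+\varepsilon f_p\mc{N}r_p$) and integrating over $\mc{U}\cap\mc{D}_p$ produces $B$, up to a universal constant, as its boundary contribution on $\partial\mc{U}\cap\mc{D}_p$; the contribution on $\mc{U}\cap\{f_p=0\}$ vanishes because $\zeta_{a,b;\varepsilon}^p$ vanishes to order $2a$ there. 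Rearranging for $B$ yields an estimate of the form
\[
B \,\lesssim\, \int_W \zeta_{a,b;\varepsilon}^p\cdot(\text{coeffs})\cdot\bigl(|\nabla\phi|^2+\phi\cdot\square\phi\bigr),
\]
with coefficients built from $\chi$, $Y$, $\Psi$, and derivatives of $\zeta_{a,b;\varepsilon}^p$.

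The last step is a weighted Caccioppoli-type argument to trade spatial gradients for $|\partial_t\phi|^2$ and $\phi^2$, matching the right-hand side of \eqref{eq.carl_int_MB} precisely. Multiplying the equation $-\partial_{tt}^2\phi+\Delta_x\phi=\square\phi$ by $\chi^2\zeta_{a,b;\varepsilon}^p f_p^{-1}\phi$ and integrating by parts (in $x$ on the Laplacian and once in $t$ on the $\partial_{tt}^2$-term) converts $\int_W\zeta_{a,b;\varepsilon}^p f_p^{-1}|\nabla_x\phi|^2$ into $\int_W\zeta_{a,b;\varepsilon}^p f_p^{-1}|\partial_t\phi|^2$, plus an error in $\phi^2$ arising from derivatives of the weight, plus an $\int\zeta_{a,b;\varepsilon}^p f_p^{-1}\phi\cdot\square\phi$ term that is absorbed into the first term on the right of \eqref{eq.carl_int_MB} via Young's inequality. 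The hard part will be the explicit bookkeeping of the $a$, $b$, $\varepsilon$, $R$ and $f_p$ dependence: each derivative of $\zeta_{a,b;\varepsilon}^p$ costs a factor of order $aR\, f_p^{-1}$, visible from \eqref{eq.carleman_weight_MB} since $\log\zeta_{a,b;\varepsilon}^p$ equals $2a\log f_p$ up to lower-order corrections and $|\nabla f_p|\lesssim R$. One application of the Caccioppoli step passes from $|\nabla\phi|^2$ to $|\partial_t\phi|^2$ at the cost of the $aR^2 f_p^{-1}$ prefactor that appears in \eqref{eq.carl_int_MB}, and a further Young's-inequality absorption of the resulting weight-derivative cross terms yields the $a^4 R^4 f_p^{-3}$ prefactor in front of $\phi^2$; all remaining error terms are absorbed into the left-hand side of \eqref{eq.carleman_est_MB} using the largeness assumption $a\gg R$ from \eqref{eq.carl_int_choices_MB}.
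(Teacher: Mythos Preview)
Your plan matches the paper's proof: apply the boundary estimate \eqref{eq.carleman_est_MB}, discard the non-positive part of the boundary integrand to reduce to $\Gamma_+$, convert that boundary term to an interior integral via a cutoff-times-normal-extension multiplier (the paper first bounds $\Psi\lesssim R$ rather than carrying it in the multiplier, and uses two nested cutoffs $\rho,\rho_1$ so the $\nabla\rho$ errors from the first step lie inside the support of the second), and then run the Caccioppoli step with multiplier $\rho_1^2\zeta_{a,b;\varepsilon}^p f_p^{-1}\phi$ together with the weight-derivative bound of Lemma~\ref{thm.der_carl_est}. One small correction to your final sentence: no errors are absorbed into the left-hand side of \eqref{eq.carleman_est_MB} --- the only absorption is of the small $|\nabla_x\phi|^2$ term (arising from Young on the $\phi\,\nabla_x\phi$ cross term) back into the left of the Caccioppoli inequality, and the hypothesis $a\gg R$ is used only to merge the two $\phi^2$ coefficients into the single $a^4R^4 f_p^{-3}$ form.
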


The proof of the above result is in the same spirit as the proof of the Carleman estimate presented in \cite[Theorem 3.27]{jena}, which is applicable to domains that are static in time. However, due to the time dependent domains considered in the current result, some new subtleties arise here. We also provide the following result from \cite[Lemma 3.29]{jena} that will be used later; we omit the proof here.

\begin{lemma} \label{thm.der_carl_est}
Assume the hypothesis of Theorem \ref{thm.carl_int_MB}. In the region \(\mc{U}\cap \mc{D}\) we have the following estimate
\begin{equation}\label{prf.der_zeta}
|\nabla^\alpha\zeta_{a,b;\varepsilon}| \lesssim a R \zeta_{a,b;\varepsilon} f^{-1},
\end{equation}
where \(\alpha\) represents derivatives in the Cartesian coordinates.
\end{lemma}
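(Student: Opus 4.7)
The bound \eqref{prf.der_zeta} is equivalent, after dividing through by the strictly positive weight $\zeta_{a,b;\varepsilon}$, to showing
\[ \bigl|\nabla^\alpha \log \zeta_{a,b;\varepsilon}\bigr| \lesssim aR f^{-1} \]
on $\mc{U}\cap\mc{D}$. The plan is thus to differentiate the explicit expression \eqref{eq.carleman_weight_MB} (taken with $p=0$)
\[ \log\zeta_{a,b;\varepsilon} = 2a\Bigl[\log f - \log(1+\varepsilon u) - \log(1-\varepsilon v) + 2bf^{1/2}(1+\varepsilon u)^{-1/2}(1-\varepsilon v)^{-1/2}\Bigr] \]
in Cartesian coordinates and to estimate each of the four summands separately, then multiply through by $\zeta_{a,b;\varepsilon}$.

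First I would collect the pointwise bounds for the elementary building blocks on $\mc{U}\cap\mc{D}\subseteq\{r<R\}$. Because $u=(t-r)/2$, $v=(t+r)/2$, $f=(r^2-t^2)/4$, and $|t|<r$ on $\mc{D}$, a direct computation gives $|u|,|v|\leq R$, $0<f\leq R^2/4$, $|\nabla^\alpha u|,|\nabla^\alpha v|\lesssim 1$, $|\nabla^\alpha f|\lesssim R$, and $|\nabla^\alpha(f^{1/2})|=|\nabla^\alpha f|/(2f^{1/2})\lesssim R/f^{1/2}$. The smallness condition $\varepsilon\ll R^{-1}$ from \eqref{eq.carl_int_choices_MB} guarantees that the denominators $1+\varepsilon u$ and $1-\varepsilon v$ are bounded above and below by fixed positive constants. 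With these in hand, differentiating the first three summands of $\log\zeta_{a,b;\varepsilon}$ contributes at most $R/f + \varepsilon$, while the logarithmic derivative of the exponential factor, computed via $\nabla h/h$ with $h = 2bf^{1/2}(1+\varepsilon u)^{-1/2}(1-\varepsilon v)^{-1/2}$ and using $|h|\lesssim bf^{1/2}$, contributes at most $bR/f^{1/2} + bR\varepsilon$.

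Combining these estimates yields
\[ \left|\frac{\nabla^\alpha \zeta_{a,b;\varepsilon}}{\zeta_{a,b;\varepsilon}}\right| \lesssim a\Bigl(\frac{R}{f} + \varepsilon + \frac{bR}{f^{1/2}} + bR\varepsilon\Bigr), \]
and the remaining step is to absorb the last three terms into the first. Using $f\leq R^2/4$ one has $R/f\geq R^{-1}$, so $\varepsilon\ll R^{-1}\leq R/f$ handles the second term; writing $bR/f^{1/2} = (bf^{1/2})\cdot R/f$ and invoking $bf^{1/2}\leq bR/2\ll 1$ (from $b\ll R^{-1}$) handles the third; and $bR\varepsilon \lesssim \varepsilon$ reduces the fourth to the already-absorbed $\varepsilon$ term. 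Multiplying through by $\zeta_{a,b;\varepsilon}$ yields \eqref{prf.der_zeta}. There is no real obstacle here, only careful bookkeeping of the smallness hierarchy $\varepsilon\ll_n b\ll R^{-1}$ together with the bound $0<f\leq R^2/4$; once these are in place the estimate is purely algebraic.
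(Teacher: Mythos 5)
Your proof is correct: the paper itself omits the proof of this lemma (deferring to \cite{jena}), and your direct differentiation of $\log\zeta_{a,b;\varepsilon}$, combined with the elementary bounds $|u|,|v|<R$, $0<f\leq R^2/4$, $|\nabla^\alpha f|\lesssim R$ and the hierarchy $\varepsilon\ll b\ll R^{-1}$, is exactly the standard computation behind it. One cosmetic slip: the logarithmic derivative of the exponential factor $e^{h}$ is $\nabla^\alpha h$, not $\nabla^\alpha h/h$, but the bound $|\nabla^\alpha h|\lesssim bRf^{-1/2}+bR\varepsilon$ that you actually invoke is the correct one for $\nabla^\alpha h$, so the argument is unaffected.
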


\begin{proof}[Proof of Theorem \ref{thm.carl_int_MB}]
Assume that the hypothesis of Theorem \ref{thm.carl_int_MB} holds. That is, we have \( \mc{U}, R, a, b, \varepsilon, W, \phi \) as in the statement of the theorem. Then, this choice of the domain and the constants also satisfies the hypothesis of Theorem \ref{thm.carl_bdry_MB}. Hence, we can apply Theorem \ref{thm.carl_bdry_MB} to get
\begin{align*}
C \varepsilon & \int_{\mc{U} \cap \mc{D}_p} \zeta_{a,b;\varepsilon}^p r_p^{-1}(|u_p \partial_{u_p} \phi|^2 + |v_p \partial_{v_p} \phi|^2 + f_p g^{ab}\slashed\nabla_a^p \phi \slashed\nabla_b^p \phi ) + Cba^2\int_{\mc{U}\cap \mc{D}_p}\zeta_{a,b;\varepsilon}^p f_p^{-\frac{1}{2}} \phi^2 \\
& \qquad \leqslant \frac{1}{a}\int_{\mc{U}\cap \mc{D}_p} \zeta_{a,b;\varepsilon}^p f_p |\square \phi|^2 +  C' \int_{\partial\mc{U}\cap \mc{D}_p}\zeta_{a,b;\varepsilon}^p [( 1 - \varepsilon r_p ) \mc{N} f_p + \varepsilon f_p \mc{N} r_p ] |\mathcal{N} \phi|^2 \text{.} 
\end{align*}
The boundary integral term in the above estimate can be bound as follows
\begin{equation} \label{eq.R_factor_MB}
\int_{\partial\mc{U}\cap \mc{D}_p}\zeta_{a,b;\varepsilon}^p [( 1 - \varepsilon r_p ) \mc{N} f_p + \varepsilon f_p \mc{N} r_p ] |\mc{N} \phi|^2 \lesssim R \int_{\Gamma_+}\zeta_{a,b;\varepsilon}^p |\mc{N} \phi|^2 \text{,} 
\end{equation}
where we used \eqref{eq_def_Gamma_MB} to obtain the integral region on the RHS. To prove the theorem, we only need to bound the term present in the RHS of the above expression, by an integral over the region $W$.

First, we define a vector field \( h \in C^1(\Bar{\mc{U}};\R^{1+n}) \) such that \(h = \mc{N} \) on \(\partial\mc{U}\cap \mc{D}_p\). Furthermore, also define a cut-off function \( \rho\in C^2(\Bar{\mc{U}};[0,1]) \) as follows
\begin{equation} \label{eq:3.1_MB}
\rho(t,x)
= \begin{cases}
1 \text{,}\hspace{1cm} (t,x) \in \mathcal{O}_{\sigma/3}(\Gamma_+)\cap \mc{U} \text{,}\\
0 \text{,} \hspace{1cm} (t,x) \in \mc{U} \setminus \mathcal{O}_{\sigma/2}(\Gamma_+) \text{.}
\end{cases}
\end{equation} 
We apply integration by parts to get the following
\begin{align} \label{eq_g10_MB}
\int_{\mc{U}\cap \mc{D}_p} \square \phi \rho \zeta_{a,b;\varepsilon}^p h \phi = - \int_{\mc{U}\cap \mc{D}_p}\nabla_\alpha \phi \nabla^\alpha (\rho \zeta_{a,b;\varepsilon}^p h \phi) + \int_{\partial\mc{U}\cap \mc{D}_p}\mathcal{N} \phi \rho \zeta_{a,b;\varepsilon}^p h \phi \text{,}
\end{align}
where the boundary integral term coming from \( \mc{U} \cap \partial\mc{D}_p \) vanishes because \( \zeta_{a,b;\varepsilon}^p|_{\partial \mc{D}_p} = 0 \). 
Note that, another application of integration by parts, along with using $\phi|_{\partial \mc{U} \cap \mc{D}_p} =0$, gives us 
0\begin{align*}
& \int_{\mc{U}\cap \mc{D}} \nabla_\alpha \phi \nabla^\alpha ( \rho \zeta_{a,b;\varepsilon}^p h \phi) \\
& =  \int_{\mc{U} \cap \mc{D}} \rho \zeta_{a,b;\varepsilon}^p \nabla_\alpha \phi \nabla^\alpha(h^\beta \nabla_\beta \phi) + \int_{\mc{U} \cap \mc{D}} \nabla_\alpha \phi \nabla^\alpha(\rho \zeta_{a,b;\varepsilon}^p) \cdot h \phi \\
& = \int_{\mc{U} \cap \mc{D}} \rho \zeta_{a,b;\varepsilon}^p \nabla_\alpha \phi \nabla^\alpha h^\beta \nabla_\beta \phi + \int_{\mc{U} \cap \mc{D}} \rho \zeta_{a,b;\varepsilon}^p \nabla_\alpha \phi \cdot h^\beta {\nabla^\alpha}_\beta \phi + \int_{\mc{U} \cap \mc{D}} \nabla_\alpha \phi \nabla^\alpha(\rho \zeta_{a,b;\varepsilon}^p) \cdot h \phi \\
& = \int_{\mc{U} \cap \mc{D}} \rho \zeta_{a,b;\varepsilon}^p \nabla^\alpha h^\beta \nabla_\alpha \phi \nabla_\beta \phi + \frac{1}{2} \int_{\mc{U} \cap \mc{D}} \rho \zeta_{a,b;\varepsilon}^p h^\beta \nabla_\beta (\nabla_\alpha \phi \nabla^\alpha \phi) + \int_{\mc{U} \cap \mc{D}} \nabla_\alpha \phi \nabla^\alpha(\rho \zeta_{a,b;\varepsilon}^p) \cdot h \phi \\
& =  \int_{\mc{U} \cap \mc{D}} \rho \zeta_{a,b;\varepsilon}^p \nabla^\alpha h^\beta \nabla_\alpha \phi \nabla_\beta \phi - \frac{1}{2} \int_{\mc{U} \cap \mc{D}} \nabla_\beta (\rho \zeta_{a,b;\varepsilon}^p h^\beta ) \nabla_\alpha \phi \nabla^\alpha \phi + \frac{1}{2} \int_{\partial \mc{U} \cap \mc{D}} \rho \zeta_{a,b;\varepsilon}^p |\mathcal{N} \phi|^2 \\
& \qquad  + \int_{\mc{U} \cap \mc{D}} \nabla_\alpha \phi \nabla^\alpha(\rho \zeta_{a,b;\varepsilon}^p) \cdot h \phi \text{.}
\end{align*}

Combining \eqref{eq_g10_MB} with the above expression, and also multiplying with the coefficient $R$, shows that
\begin{align*}
\frac{R}{2} \int_{\partial\mc{U}\cap \mc{D}_p}\rho\zeta_{a,b;\varepsilon}^p |\mathcal{N} \phi|^2 & = R \int_{\mc{U}\cap \mc{D}_p}\square \phi \rho\zeta_{a,b;\varepsilon}^p h \phi + R \int_{\mc{U} \cap \mc{D}_p} \rho \zeta_{a,b;\varepsilon}^p \nabla^\alpha h^\beta \nabla_\alpha \phi \nabla_\beta \phi \\
& \ \ \ - \frac{R}{2} \int_{\mc{U} \cap \mc{D}_p} \nabla_\beta (\rho \zeta_{a,b;\varepsilon}^p h^\beta ) \nabla_\alpha \phi \nabla^\alpha \phi + R \int_{\mc{U} \cap \mc{D}_p} \nabla_\alpha \phi \nabla^\alpha(\rho \zeta_{a,b;\varepsilon}^p) h\phi \text{.}
\end{align*}

Using estimates for derivatives of $\zeta_{a,b;\varepsilon}$ from Lemma \ref{thm.der_carl_est} and the triangle inequality, implies
\begin{align*}
R \int_{\partial\mc{U}\cap \mc{D}_p}\rho\zeta_{a,b;\varepsilon}^p |\mathcal{N} \phi|^2 & \lesssim  \frac{1}{a} \int_{\mc{U}\cap \mc{D}_p} \rho \zeta_{a,b;\varepsilon}^p f_p |\square \phi|^2 + a R^2 \int_{\mc{U}\cap \mc{D}_p} \rho \zeta_{a,b;\varepsilon}^p f_p^{-1}| h \phi|^2 \numberthis \label{eq_g20_MB} \\
& \qquad+ R \int_{\mc{U} \cap \mc{D}_p} \rho \zeta_{a,b;\varepsilon}^p \nabla^\alpha h^\beta \nabla_\alpha \phi \nabla_\beta \phi \\
& \qquad + a R^2 \int_{\mc{U} \cap \mc{D}_p} (\rho + |h^\beta\nabla_\beta \rho|) \zeta_{a,b;\varepsilon}^p f_p^{-1} \nabla_\alpha \phi \nabla^\alpha \phi \\
& \qquad + a R^2 \int_{\mc{U} \cap \mc{D}_p} (\rho + |\nabla^\alpha \rho|) \zeta_{a,b;\varepsilon}^p f_p^{-1} | \nabla_\alpha \phi \cdot h\phi| .
\end{align*}

Using \eqref{eq:3.1_MB}, the LHS can be estimated from below, as follows
\begin{equation*}
R \int_{\Gamma_+} \zeta_{a,b;\varepsilon}^p |\mathcal{N} \phi|^2 \leqslant R \int_{\Gamma_+} \rho \zeta_{a,b;\varepsilon}^p |\mathcal{N} \phi|^2 \leqslant
R \int_{\partial\mc{U} \cap \mc{D}_p} \rho \zeta_{a,b;\varepsilon}^p |\mathcal{N} \phi|^2 \text{,}
\end{equation*}
where we also used the fact that $\Gamma_+ \subset \partial \mc{U} \cap \mc{D}_p$. Then, combining the above and \eqref{eq_g20_MB}, shows that
\begin{equation} \label{eq:g2_MB} 
R \int_{ \Gamma_+ } \zeta_{a,b;\varepsilon}^p |\mathcal{N} \phi|^2 \lesssim \frac{1}{a} \int_{\mc{U}\cap \mc{D}_p} \zeta_{a,b;\varepsilon}^p f_p |\square \phi|^2 + a R^2 \int_{\mathcal{O}_{\sigma/2}(\Gamma_+) \cap (\mc{U} \cap \mc{D}_p)} \zeta_{a,b;\varepsilon}^p f_p^{-1} ( |\nabla_x \phi|^2 + |\partial_t \phi|^2 ) \text{.} 
\end{equation}

We will bound the last term in the RHS of the above estimate, by data on the region $W$.
For this purpose, we first define a cut-off function \( \rho_1\in C^2 ( \Bar{\mc{U}};[0,1] ) \) as follows
\begin{equation} \label{eq:rho_1_MB}
\rho_1(t,x) = \begin{cases}
1 \text{,} \qquad (t,x) \in \mathcal{O}_{\sigma/2}(\Gamma_+) \cap \mc{U} \cap \mc{D}_p \text{,} \\
0 \text{,} \qquad (t,x) \in (\mc{U} \cap \mc{D}_p) \setminus W \text{.}
\end{cases} 
\end{equation}

Then, let \( \eta \) be the function defined as $\eta(t,x):=\rho_1^2 \zeta_{a,b;\varepsilon}^p f_p^{-1}$. Then, we have the following estimates for derivatives of $\eta$
\begin{align*} 
|\nabla_x \eta| = |\nabla_x (\rho_1^2 \zeta_{a,b;\varepsilon}^p f_p^{-1})| & \lesssim \rho_1 \zeta_{a,b;\varepsilon}^p \left( f_p^{-1} |\nabla_x \rho_1| + a R \rho_1 f_p^{-2} \right) \text{,} \numberthis \label{eq_eta_der_MB}\\ 
|\partial_t \eta| & \lesssim a R \rho_1^2  \zeta_{a,b;\varepsilon}^p f_p^{-2} \text{.}
\end{align*}

Now, an application of integration by parts gives us the following
\begin{align*}
\int_{\mc{U}\cap \mc{D}_p}\eta |\nabla_x \phi|^2 & = - \int_{\mc{U}\cap \mc{D}_p}\eta\phi \square \phi + \int_{\mc{U}\cap \mc{D}_p} \phi \partial_t \phi \partial_t \eta + \int_{\mc{U}\cap \mc{D}_p} \eta |\partial_t \phi|^2  - \int_{\mc{U}\cap \mc{D}_p} \phi \nabla_x \phi \cdot \nabla_x \eta \text{.} 
\end{align*} 

Substituting the expression for $\eta$, we get
\begin{align*}
\int_{\mc{U} \cap \mc{D}_p} \rho_1^2 \zeta_{a,b;\varepsilon}^p f_p^{-1} |\nabla_x \phi|^2 & \leqslant  \int_{\mc{U}\cap \mc{D}_p}|\eta \phi\square \phi| + \int_{\mc{U}\cap \mc{D}_p}|\phi \partial_t \phi \cdot \partial_t \eta| + \int_{\mc{U}\cap \mc{D}_p} \eta |\partial_t \phi|^2  \numberthis \label{eq_eta_2_MB}\\
& \qquad - \int_{\mc{U}\cap \mc{D}_p} \phi \nabla_x \phi \cdot \nabla_x \eta \text{.}
\end{align*}

Using \eqref{eq_eta_der_MB}, and also multiplying the coefficient $aR^2$ throughout, the above estimate gives
\begin{align*}
a R^2 \int_{\mc{U}\cap \mc{D}_p} \rho_1^2 \zeta_{a,b;\varepsilon}^p f_p^{-1} | \nabla_x \phi|^2 & \lesssim a R^2 \int_{\mc{U}\cap \mc{D}_p} \rho_1^2\zeta_{a,b;\varepsilon}^p f_p^{-1} | \phi\square \phi|  \numberthis \label{eq_g111_MB} \\
& \qquad + a^2 R^3 \int_{\mc{U}\cap \mc{D}_p}\rho_1^2 \zeta_{a,b;\varepsilon}^p f_p^{-2} |\phi \partial_t \phi | + a R^2 \int_{\mc{U}\cap\mc{D}_p}\rho_1^2\zeta_{a,b;\varepsilon}^p f_p^{-1}|\partial_t \phi|^2 \\
& \qquad  + a R^2 \int_{\mc{U}\cap \mc{D}_p} \rho_1 \zeta_{a,b;\varepsilon}^p \left[ f_p^{-1} |\nabla_x \rho_1| + a R \rho_1 f_p^{-2} \right] | \phi \nabla_x \phi | \text{.}
\end{align*}

Note that, using the Cauchy-Schwarz inequality shows that the following is satisfied
\begin{align*}
a R^2 \int_{\mc{U}\cap \mc{D}_p} & \rho_1 \zeta_{a,b;\varepsilon}^p \left[ f_p^{-1} |\nabla_x \rho_1| + a R \rho_1 f_p^{-2} \right] | \phi \nabla_x \phi | \\
& \lesssim a^2 R^2 \int_{\mc{U}\cap \mc{D}_p} |\nabla_x\rho_1|^2 \zeta_{a,b;\varepsilon}^p f_p^{-1} \phi^2 + R^2 \int_{\mc{U}\cap \mc{D}_p} \rho_1^2 \zeta_{a,b;\varepsilon}^p f_p^{-1} |\nabla_x \phi|^2 \\
& \qquad + a^4 R^4 \int_{\mc{U}\cap \mc{D}_p} \rho_1^2 \zeta_{a,b;\varepsilon}^p f_p^{-3} \phi^2 \text{.}
\end{align*}

We apply Cauchy-Schwarz inequality for the $``\phi \square \phi"$ and the $``\phi \partial_t \phi"$ integral terms in the RHS of \eqref{eq_g111_MB}. Then, \eqref{eq_g111_MB} reduces to
\begin{align*}
\numberthis \label{eq:g123_MB} a R^2 \int_{ \mc{U} \cap \mc{D}_p } & \rho_1^2 \zeta_{a,b;\varepsilon}^p f_p^{-1} |\nabla_x \phi|^2 \\
& \lesssim \frac{1}{a} \int_{\mc{U}\cap \mc{D}_p} \rho_1^2 \zeta_{a,b;\varepsilon}^p f_p |\square \phi|^2  + a R^2 \int_{\mc{U}\cap \mc{D}_p} \rho_1^2 \zeta_{a,b;\varepsilon}^p f_p^{-1} |\partial_t \phi|^2\\
& \quad + a^2 R^2 \int_{\mc{U}\cap \mc{D}_p} (|\nabla_x\rho_1|^2 + |\partial_t \rho_1|^2) \zeta_{a,b;\varepsilon}^p f_p^{-1} \phi^2 + a^4 R^4 \int_{\mc{U}\cap \mc{D}_p} \rho_1^2 \zeta_{a,b;\varepsilon}^p f_p^{-3} \phi^2 \text{.}
\end{align*}

Now, the coefficients of \( \phi^2 \) can be estimated as
\begin{align*}
[ a^2 R^2 (|\nabla_x \rho_1|^2+ |\partial_t \rho_1|^2) f_p^{-1} + a^4 R^4 \rho_1^2 f_p^{-3} ] & \lesssim [ a^2 R^6 (|\nabla_x \rho_1|^2 + |\partial_t \rho_1|^2) f_p^{-3} + a^4 R^4 \rho_1^2 f_p^{-3} ] \\
& \lesssim \ a^4 R^4 \Big( |\nabla_x \rho_1|^2 + |\partial_t \rho_1|^2 + \rho_1^2 \Big) f_p^{-3} \text{.}
\end{align*}

Then, reducing the integral region on the LHS of \eqref{eq:g123_MB} appropriately and using \eqref{eq:rho_1_MB}, we get from \eqref{eq:g123_MB} that
\begin{align*}
a R^2 \int_{\mathcal{O}_{\sigma/2}(\Gamma_+) \cap \mc{U} \cap \mc{D}_p } \rho_1^2 \zeta_{a,b;\varepsilon}^p f_p^{-1} |\nabla_x \phi|^2 & \lesssim \frac{1}{a} \int_{\mc{U}\cap \mc{D}_p} \zeta_{a,b;\varepsilon}^p f_p |\square \phi|^2 + a R^2 \int_W \zeta_{a,b;\varepsilon}^p f^{-1} |\partial_t \phi|^2 \\
& \qquad + a^4 R^4  \int_W \zeta_{a,b;\varepsilon}^p f_p^{-3} \phi^2 \text{.}
\end{align*}
Combining \eqref{eq:g2_MB} and the above estimate, shows that
\begin{align*}
R \int_{ \Gamma_+ } \zeta_{a,b;\varepsilon}^p |\mathcal{N} \phi|^2 \lesssim \frac{1}{a} \int_{\mc{U}\cap \mc{D}_p} \zeta_{a,b;\varepsilon}^p f_p |\square \phi|^2 + a R^2 \int_W \zeta_{a,b;\varepsilon}^p f_p^{-2} |\partial_t \phi|^2 + a^4 R^4  \int_W \zeta_{a,b;\varepsilon}^p f_p^{-3} \phi^2 \text{.}
\end{align*}
This concludes the proof of the theorem.
\end{proof}

\section{Observability} \label{ch4_sec_obs}

In this section, we will use Theorem \ref{thm.carl_int_MB} to prove the main observability result Theorem \ref{thm_obs_main_intro_MB}. We will first prove two preliminary observability estimates
\begin{itemize}
\item Exterior observability: This deals with the case when we apply the Carleman estimate about a point $p \notin \bar{\mc{U}}$.

\item Interior observability: Here, we apply the Carleman estimate around a point $p \in \mc{U}$.

\end{itemize}

Then we combine the two results appropriately, to conclude the proof of Theorem \ref{thm_obs_main_intro_MB}.

We now present an energy estimate result for $\phi$, the solution of the adjoint system\eqref{eq.intro_obs_MB} that will be used to show the above observability estimates.

\begin{proposition}[Energy Estimate I] \label{thm.energy_est_1_MB}
Let $\tau_1 < \tau_2$, and define constants $\mc{M}_0$ and $\mc{M}_1$ as follows
\begin{equation}
\label{eq.energy_est_M_MB} \mc{M}_0 := 1 + \sup_{ \mc{U}_{\tau_1, \tau_2}  } | V | \text{,} \qquad \mc{M}_1 := 1 + \sup_{ \mc{U}_{\tau_1, \tau_2} } | \mc{X}^{ t, x } | \text{.}
\end{equation}
Then, there exist constants $C, C' > 0$, depending on $\mc{U}, \tau_1, \tau_2$, such that
\begin{align}
\label{eq.energy_est_a_MB} \int_{ \mc{U}_{\tau_1} } ( | \nabla_{ t, x } \phi |^2 + \mc{M}_0 \cdot \phi^2 ) &\leq C e^{ C' ( \mc{M}_0^\frac{1}{2} + \mc{M}_1 ) |\tau_1-\tau_2| } \int_{\mc{U}_{\tau_2}} ( | \nabla_{ t, x } \phi |^2 + \mc{M}_0 \cdot \phi^2 ) \text{,} \\
\notag \int_{ \mc{U}_{\tau_2} } ( | \nabla_{ t, x } \phi |^2 + \mc{M}_0 \cdot \phi^2 ) &\leq C e^{ C' ( \mc{M}_0^\frac{1}{2} + \mc{M}_1 ) |\tau_1-\tau_2| } \int_{\mc{U}_{\tau_1}} ( | \nabla_{ t, x } \phi |^2 + \mc{M}_0 \cdot \phi^2 ) \text{.}
\end{align}
for any solution $\phi \in C^2 ( \mc{U} ) \cap C^1 ( \bar{\mc{U}} )$ of \eqref{eq.intro_obs_MB} satisfying $\phi |_{ \partial \mc{U}_{\tau_1, \tau_2} } = 0$.
\end{proposition}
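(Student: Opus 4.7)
The plan is to run a standard energy-multiplier argument, using the timelike vector field $\mc{Z}$ from Definition \ref{def_U_domain_MB} as the multiplier instead of $\partial_t$. The key point is that $\mc{Z}$ is tangent to $\partial\mc{U}$, which together with the Dirichlet condition $\phi|_{\partial\mc{U}_{\tau_1,\tau_2}}=0$ will cause the lateral boundary contribution to drop out; this is essentially the only place where the time-dependent geometry really enters.

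First I would define the current
\[
J^\alpha := T^{\alpha\beta}[\phi]\,\mc{Z}_\beta + \tfrac{1}{2}\mc{M}_0\phi^2\,\mc{Z}^\alpha,
\qquad T^{\alpha\beta}[\phi] := \nabla^\alpha\phi\nabla^\beta\phi - \tfrac{1}{2}g^{\alpha\beta}\nabla_\gamma\phi\nabla^\gamma\phi,
\]
and apply the divergence theorem on $\mc{U}_{\tau,\tau'}$ for arbitrary $\tau<\tau'$ in $[\tau_1,\tau_2]$. On $\partial\mc{U}_{\tau,\tau'}$ the condition $\phi=0$ forces $\nabla\phi=(\mc{N}\phi)\mc{N}$, while tangency of $\mc{Z}$ gives $\mc{Z}\phi=0$ and $g(\mc{N},\mc{Z})=0$, so the lateral contribution vanishes identically. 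On the cross-sections $\mc{U}_\tau,\mc{U}_{\tau'}$, the dominant energy condition for the future timelike pair $(\mc{Z},\partial_t)$ combined with uniform upper bounds on $\mc{Z}$ over $\bar{\mc{U}}_{\tau_1,\tau_2}$ yields $J^\alpha(\partial_t)_\alpha\simeq|\nabla_{t,x}\phi|^2+\mc{M}_0\phi^2$ up to constants depending only on the geometry, so the cross-section integrals are comparable to $E(\tau):=\int_{\mc{U}_\tau}(|\nabla_{t,x}\phi|^2+\mc{M}_0\phi^2)$.

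For the bulk term one computes
\[
\nabla_\alpha J^\alpha = (\square\phi)(\mc{Z}\phi) + \tfrac{1}{2}T^{\alpha\beta}[\phi]\,\pi^{\mc{Z}}_{\alpha\beta} + \mc{M}_0\phi(\mc{Z}\phi) + \tfrac{1}{2}\mc{M}_0\phi^2\,\nabla_\alpha\mc{Z}^\alpha,
\]
where $\pi^{\mc{Z}}_{\alpha\beta}=\nabla_\alpha\mc{Z}_\beta+\nabla_\beta\mc{Z}_\alpha$ is the deformation tensor. Substituting $\square\phi=-\nabla_\mc{X}\phi-V\phi$ from the adjoint equation \eqref{eq.intro_obs_MB} and applying Cauchy--Schwarz gives
\[
|\nabla_\alpha J^\alpha|\;\lesssim\;(\mc{M}_0^{1/2}+\mc{M}_1)\bigl(|\nabla_{t,x}\phi|^2+\mc{M}_0\phi^2\bigr),
\]
where the $\mc{M}_0^{1/2}$ factor (as opposed to $\mc{M}_0$) arises from the split $\mc{M}_0|\phi||\mc{Z}\phi|\le\tfrac12\mc{M}_0^{1/2}(\mc{M}_0\phi^2+|\mc{Z}\phi|^2)$, and the implicit constant absorbs uniform bounds on $\pi^{\mc{Z}}$ and $\nabla\mc{Z}$ over $\bar{\mc{U}}_{\tau_1,\tau_2}$.

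Putting these pieces together, the divergence theorem produces $|E(\tau')-E(\tau)|\lesssim(\mc{M}_0^{1/2}+\mc{M}_1)\int_{\tau}^{\tau'}E(s)\,ds$ for all $\tau_1\le\tau\le\tau'\le\tau_2$, and Grönwall's inequality applied once in each time direction yields \eqref{eq.energy_est_a_MB}. The main subtlety is the choice of multiplier: the naive $\partial_t$ is generally not tangent to the moving boundary, so it would leave a lateral boundary term of indefinite sign that cannot be absorbed without additional assumptions on the motion of $\Omega_\tau$; using the geometric vector field $\mc{Z}$ supplied by Definition \ref{def_U_domain_MB} is precisely what sidesteps this issue.
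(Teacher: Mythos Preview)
Your proposal is correct and is precisely the ``standard energy argument'' the paper alludes to: the paper does not supply its own proof but defers to \cite[Proposition~2.21]{Arick}, whose argument is exactly this $\mc{Z}$-multiplier energy estimate with the lateral boundary term killed by the tangency of $\mc{Z}$ and the Dirichlet condition. Your identification of the crucial point --- that $\partial_t$ fails because it is not tangent to $\partial\mc{U}$, whereas $\mc{Z}$ from Definition~\ref{def_U_domain_MB} is designed for this purpose --- is spot on, and the $\mc{M}_0^{1/2}$ splitting for the potential term is handled correctly.
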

The above proposition is proved using standard energy arguments (see \cite[Proposition 2.21]{Arick}, or \cite{duy_zhang_zua:obs_opt} for related versions).

\subsection{Exterior observability} \label{ssec_ext_obs_MB}
The following result gives us the observability estimate when the point $p \notin \bar{\mc{U}}$.
\begin{theorem} \label{thm_obs_1_MB}
Let $p \in \R^{1+n} \setminus \bar{\mc{U}}$ be fixed. Also, fix $ 0 < \delta \ll 1$. Assume that $\mc{U} \cap \mc{D}_p$ is bounded. Define the following constants
\begin{align*}
M_0 := \sup_{\mc{U} \cap \mc{D}_p} |V|, & \qquad M_1 := \sup_{\mc{U} \cap \mc{D}_p} |\mc{X}^{t,x}|, \numberthis \label{eq_obs_thm_1_MB}\\
R_+ := \sup_{\mc{U} \cap \mc{D}_p} r_p, & \qquad R_- := \inf_{\mc{U}_{t(p)}} r_p \text{.} 
\end{align*}
Next, choose $\varepsilon$ as
\begin{equation}
\varepsilon = \frac{\delta^2}{R_+} \text{.}
\end{equation}
Now, let $\Gamma_+$ and $W$ be defined according to Definition \ref{def_Gamma_MB} and Definition \ref{def_omega_MB}, respectively, using the above $\varepsilon$. That is,
\begin{equation} \label{eq_obs_thm_1s_MB}
\Gamma_+  := \partial \mc{U} \cap \mc{D}_p \cap \left\{ \left( 1 - \frac{\delta^2}{R_+} r_p \right) \mc{N} f_p + \frac{\delta^2}{R_+} f_p \mc{N} r_p > 0 \right\} \text{,} \quad W = \mc{O}_\sigma (\Gamma_+) \cap (\mc{U} \cap \mc{D}_p).
\end{equation}
Then, there exists a constant $C>0$, such that the following is satisfied
\begin{equation}
\int_{\mc{U}_{t(p)}} ( |\nb_{t,x} \phi|^2 + \phi^2 ) \leqslant C \int_{W} ( |\partial_t \phi|^2 + \phi^2) \text{,}
\end{equation}
for any solution $\phi \in C^2(\mc{U}) \cap C^1(\bar{\mc{U}})$ of \eqref{eq.intro_obs_MB} satisfying $\phi|_{\partial \mc{U} \cap \mc{D}_p}=0$.
\end{theorem}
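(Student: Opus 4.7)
The plan is to apply the interior Carleman estimate of Theorem \ref{thm.carl_int_MB} to $\phi$, use the adjoint equation \eqref{eq.intro_obs_MB} to absorb the resulting $\square\phi$ term into the bulk left-hand side, and then convert the weighted spacetime inequality into the time-slice observability via the energy estimate of Proposition \ref{thm.energy_est_1_MB}.

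First I would verify that the hypotheses of Theorem \ref{thm.carl_int_MB} are met with $R = R_+$, the prescribed $\varepsilon = \delta^2/R_+$, some $b$ satisfying $\varepsilon \ll_n b \ll R_+^{-1}$, and $a$ to be fixed large later. Since $\phi$ solves \eqref{eq.intro_obs_MB}, the d'Alembertian obeys $|\square\phi|^2 \lesssim M_1^2 |\nabla_{t,x}\phi|^2 + M_0^2 \phi^2$, so the term $\frac{1}{a}\int \zeta_{a,b;\varepsilon}^p f_p |\square\phi|^2$ on the right of \eqref{eq.carl_int_MB} is bounded above by $\frac{M_1^2}{a}\int \zeta_{a,b;\varepsilon}^p f_p |\nabla_{t,x}\phi|^2 + \frac{M_0^2}{a}\int \zeta_{a,b;\varepsilon}^p f_p \phi^2$. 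Using $f_p \leq R_+^2$ together with the null-coordinate identities between $\nabla_{t,x}$ and $(\partial_{u_p}, \partial_{v_p}, \slashed\nabla^p)$, both of these terms are dominated by a constant multiple of the bulk quantities on the left of \eqref{eq.carl_int_MB}; choosing $a$ large in terms of $M_0, M_1, R_+, \varepsilon, b$ lets us absorb them, yielding
\[
\int_{\mc{U}\cap\mc{D}_p} \zeta_{a,b;\varepsilon}^p \Bigl[ r_p^{-1}\bigl( |u_p\partial_{u_p}\phi|^2 + |v_p\partial_{v_p}\phi|^2 + f_p|\slashed\nabla^p\phi|^2 \bigr) + f_p^{-1/2}\phi^2 \Bigr] \lesssim \int_W \zeta_{a,b;\varepsilon}^p \bigl( f_p^{-1} |\partial_t \phi|^2 + f_p^{-3} \phi^2 \bigr).
\]

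Next I would extract the time-slice bound. On $\mc{U}_{t(p)}$ we have $t_p = 0$, hence $f_p = r_p^2/4 \in [R_-^2/4, R_+^2/4]$; since $p \notin \bar{\mc{U}}$ guarantees $R_- > 0$, the weight $\zeta_{a,b;\varepsilon}^p$ is uniformly bounded below on this slice, and by continuity also on a sufficiently thin time-slab $\mc{S}_\eta := \mc{U} \cap \{|t - t(p)| < \eta\}$. There the null-coordinate quadratic form on the left dominates a positive constant times $|\nabla_{t,x}\phi|^2 + \phi^2$. On the other hand, the strict inequality in the definition \eqref{eq_obs_thm_1s_MB} of $\Gamma_+$ keeps $\overline{\Gamma_+}$ inside the interior of $\mc{D}_p$, so $f_p$ is uniformly positive on $W$ (for $\sigma$ small enough), making $\zeta_{a,b;\varepsilon}^p f_p^{-1}$ and $\zeta_{a,b;\varepsilon}^p f_p^{-3}$ uniformly bounded above on $W$. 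Combining these observations gives
\[
\int_{\mc{S}_\eta} (|\nabla_{t,x}\phi|^2 + \phi^2) \lesssim \int_W (|\partial_t \phi|^2 + \phi^2).
\]
For each $\tau \in (t(p) - \eta, t(p) + \eta)$, Proposition \ref{thm.energy_est_1_MB} transports the energy at time $\tau$ to the time $t(p)$ with a constant depending only on $\eta, M_0, M_1$; integrating this pointwise-in-$\tau$ bound in $\tau$ and dividing by $2\eta$ yields the claimed observability inequality.

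The main obstacle is the absorption step: the degenerate Carleman weights $r_p^{-1}$ and $f_p^{-1/2}$ on the left of \eqref{eq.carl_int_MB} force a delicate quantitative comparison with the $\zeta_{a,b;\varepsilon}^p f_p |\nabla_{t,x}\phi|^2$ and $\zeta_{a,b;\varepsilon}^p f_p \phi^2$ terms, which in turn pins down how large $a$ must be. A secondary subtlety is ensuring $f_p$ stays uniformly positive on $W$; this follows from compactness together with the strict inequality defining $\Gamma_+$, but requires the parameter $\sigma$ to be chosen small enough to avoid the null cone $\{f_p = 0\}$.
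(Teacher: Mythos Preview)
Your absorption step for the first-order term $\frac{M_1^2}{a}\int \zeta_{a,b;\varepsilon}^p f_p |\nabla_{t,x}\phi|^2$ does not go through. Converting to null coordinates gives $|\nabla_{t,x}\phi|^2 \lesssim |\partial_{u_p}\phi|^2 + |\partial_{v_p}\phi|^2 + g^{ab}\slashed\nabla^p_a\phi\slashed\nabla^p_b\phi$, so you are trying to dominate, e.g., $\frac{M_1^2}{a} f_p |\partial_{u_p}\phi|^2$ by $\varepsilon r_p^{-1} u_p^2 |\partial_{u_p}\phi|^2$. Since $f_p = -u_p v_p$, the required pointwise inequality is $\frac{M_1^2}{a} v_p \ll \varepsilon r_p^{-1}(-u_p)$, which fails near the portion of the null cone where $u_p \to 0^-$ while $v_p$ stays bounded away from zero; no choice of the constant $a$ can repair this degeneracy. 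The paper handles exactly this point by splitting $\mc{U}\cap\mc{D}_p$ into $\mc{U}_>$ (where $f_p \gtrsim R_-^2$, so $-u_p$ and $v_p$ are both bounded below and the absorption works) and $\mc{U}_\leq$ (where the Carleman weight $\zeta_{a,b;\varepsilon}^p$ is exponentially small in $a$, so the remaining piece $I_{\mc{X},\leq}$ is controlled via the localized energy estimate of Proposition~\ref{thm_energy_est_2_MB}). Your sketch omits this decomposition and the second energy estimate entirely.

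There is a second gap in your treatment of the $W$-terms. You assert that the strict inequality in \eqref{eq_obs_thm_1s_MB} forces $\overline{\Gamma_+}\subset\mc{D}_p$, hence $f_p$ is uniformly positive on $W$. This is false: at a point $q\in\partial\mc{U}\cap\partial\mc{D}_p$ one has $f_p(q)=0$, and the defining quantity reduces to $(1-\varepsilon r_p)\mc{N}f_p$, which can be strictly positive; such $q$ lie in $\overline{\Gamma_+}$, so $f_p$ can be arbitrarily small on $W$ regardless of $\sigma$. The correct reason the right-hand side is finite is that $\zeta_{a,b;\varepsilon}^p$ vanishes to order $2a$ at $\{f_p=0\}$, so $\zeta_{a,b;\varepsilon}^p f_p^{-3}$ is bounded on $W$ (the paper records the explicit bound $a^4 R_+^4 f_p^{-3}\zeta_{a,b;\varepsilon}^p \leq a^4 2^{4a} R_+^{4a-2}$). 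You should invoke this structure of the weight rather than a nonexistent lower bound on $f_p$.
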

We also need the following energy estimate that is restricted to the exterior region $\mc{D}_p$.
\begin{proposition}[Energy Estimate II] \label{thm_energy_est_2_MB}
Let $p \in \R^{1+n}$ be fixed. Let $M_0, M_1$ be defined as in \eqref{eq_obs_thm_1_MB}. Then for any $\tau \in (\tau_-, \tau_+)$, we have
\begin{equation}
\int_{ \mc{U}_\tau \cap \mc{D}_p } [| \nabla_{ t, x } \phi |^2 + (1+M_0) \phi^2] \leq C e^{ C' (1 + M_0^{\frac{1}{2}} + M_1 ) |\tau - t_p| } \int_{ \mc{U}_{t_p} \cap \mc{D}_p } [ | \nabla_{ t, x } \phi |^2 + (1+M_0) \phi^2 ] ,
\end{equation}
for any solution $\phi \in C^2(\mc{U}) \cap C^1(\bar{\mc{U}})$ of \eqref{eq.intro_obs_MB} satisfying $\phi|_{\partial \mc{U} \cap \mc{D}_p}=0$.
\end{proposition}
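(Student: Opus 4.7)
The plan is a standard multiplier argument on the spacetime slab $\mc{R} := \mc{U} \cap \mc{D}_p \cap \{\min(t(p),\tau) < t < \max(t(p),\tau)\}$, with the additional input that the flux through the null portion of $\partial \mc{R}$ has a favourable sign. Without loss of generality take $\tau > t(p)$; the other case is symmetric. The boundary $\partial \mc{R}$ decomposes into (i) the two spacelike slices $\mc{U}_{t(p)} \cap \mc{D}_p$ and $\mc{U}_\tau \cap \mc{D}_p$, (ii) the timelike portion $\partial \mc{U} \cap \mc{D}_p \cap \{t(p) < t < \tau\}$, on which $\phi = 0$, and (iii) the null portion $\mc{U} \cap \partial \mc{D}_p \cap \{t(p) < t < \tau\}$, which is the part of the future null cone from $p$ lying inside $\mc{U}$.

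I would apply the divergence theorem to $T^{\alpha\beta}[\phi] Y_\beta$, where $T_{\alpha\beta}[\phi] = \nabla_\alpha\phi \nabla_\beta\phi - \tfrac{1}{2} g_{\alpha\beta} \nabla^\gamma\phi \nabla_\gamma\phi$ is the energy-momentum tensor of $\square$ and $Y$ is a smooth future-directed timelike vector field on $\bar{\mc{U}}$ that is tangent to $\partial \mc{U}$---for instance, $Y = \mc{Z}$ from Definition \ref{def_U_domain_MB}. The two spacelike fluxes reproduce, up to constants depending only on $Y$, the energies $E(s) := \int_{\mc{U}_s \cap \mc{D}_p}(|\nabla_{t,x}\phi|^2 + (1+M_0)\phi^2)$ at $s = t(p)$ and $s = \tau$. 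The flux on (ii) vanishes because $g(Y,\mc{N}) = 0$ on $\partial \mc{U}$ and $\phi|_{\partial\mc{U}} = 0$. The flux on (iii) is nonnegative by the dominant energy condition---the outward null normal to $\mc{D}_p$ along the future cone is past-directed null while $Y$ is future-directed timelike---and can therefore be dropped, giving an inequality; this is finite speed of propagation manifesting itself. The $\phi^2$ contribution to $E(\tau)$ is handled separately by writing $\phi(\tau,x) - \phi(t(p),x) = \int_{t(p)}^\tau \partial_t\phi(s,x)\,ds$ and applying Cauchy--Schwarz, reducing its bound to the $|\nabla_{t,x}\phi|^2$ estimate.

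The bulk divergence
\[
\nabla_\alpha\bigl(T^{\alpha\beta} Y_\beta\bigr) = T^{\alpha\beta} \pi_{\alpha\beta}[Y] + (\square \phi)(Y\phi),
\]
combined with $\square \phi = -\nabla_\mc{X} \phi - V\phi$ from \eqref{eq.intro_obs_MB} and Cauchy--Schwarz, produces a bulk term bounded by $C(1 + M_0^{1/2} + M_1) \int_{\mc{R}} (|\nabla_{t,x}\phi|^2 + (1+M_0)\phi^2)$; the deformation tensor $\pi[Y]$ contributes only fixed constants depending on the geometry of $\mc{U}$. Slicing $\mc{R}$ by constant-$t$ hypersurfaces yields the integral inequality
\[
E(\tau) \leq C\, E(t(p)) + C(1 + M_0^{1/2} + M_1) \int_{t(p)}^\tau E(s)\, ds,
\]
to which Gronwall's lemma applies, producing the claimed exponential factor. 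The principal subtlety I expect is the treatment of the timelike boundary flux: this is precisely where the tangential timelike vector field $\mc{Z}$ guaranteed by Definition \ref{def_U_domain_MB} is essential, since without it one picks up an indefinite boundary term that cannot be absorbed into the Gronwall argument. The null boundary is by contrast straightforward, its favourable sign being automatic from the causal structure.
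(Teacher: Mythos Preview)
Your approach is essentially what the paper has in mind---it states only that the proposition ``can be proved by using standard energy arguments,'' and the multiplier argument with $Y=\mc{Z}$, the sign on the null portion of $\partial\mc{D}_p$, and Gronwall is exactly that standard argument.

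One genuine (though minor) issue: your separate treatment of the $\phi^2$ term via $\phi(\tau,x)-\phi(t(p),x)=\int_{t(p)}^\tau \partial_t\phi(s,x)\,ds$ does not work as written on a time-dependent domain, since for fixed $x$ the point $(s,x)$ need not lie in $\mc{U}$ for all intermediate $s$. The clean fix is to fold the zeroth-order piece into the multiplier current itself: apply the divergence theorem to $J^\alpha = T^{\alpha\beta}[\phi]Y_\beta + (1+M_0)\phi^2 Y^\alpha$ on $\mc{R}$. The extra bulk term $(1+M_0)(2\phi\, Y\phi + \phi^2\,\nabla_\alpha Y^\alpha)$ is bounded by $C(1+M_0)^{1/2}E(s)$ via Cauchy--Schwarz, consistent with the claimed exponent, and all boundary contributions---timelike, null, and spacelike---behave exactly as you already described. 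With that adjustment the argument is complete.
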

Similar to Proposition \ref{thm.energy_est_1_MB}, the above proposition can be proved by using standard energy arguments.

\begin{proof}[Proof of Theorem \ref{thm_obs_1_MB}]
First, note that \eqref{eq.carl_int_domain_MB} is satisfied with $R=R_+$. Now, choose sufficiently large $a$ such that 
\begin{equation} \label{eq_obs_pf_01_MB}
a \gg_{ \mc{U} } R_+ \text{,} \qquad a \gg_{ \mc{U} } \delta^{ - \frac{1}{3} } R_+^\frac{4}{3} M_0^\frac{2}{3} \text{,} \qquad a \gg_{ \mc{U} } \delta^{-2} R_-^{-2} R_+^4 M_1^2 \text{,}
\end{equation}
and $b:= \delta R_+^{-1}$. 

Now, we apply Theorem \ref{thm.carl_int_MB} to the given $a, b, \varepsilon, \mc{U}, p$, and get the following estimate
\begin{align*}
\numberthis \label{eq_obs_pf_MB} \frac{C \delta^2}{R_+^2} \int_{\mc{U} \cap \mc{D}_p} & \zeta^p_{a,b;\varepsilon} (|u_p \partial_{u_p} \phi |^2 + |v_p \partial_{v_p} \phi |^2 + f_p g^{ab}\slashed \nabla^p_a \phi \slashed\nabla^p_b \phi ) + \frac{C \delta a^2}{R_+^2}\int_{\mc{U} \cap \mc{D}_p} \zeta^p_{a,b;\varepsilon} \phi^2 \\
& \leqslant \frac{2}{a} \int_{\mc{U}\cap \mc{D}_p} \zeta^p_{a,b;\varepsilon} f_p |\nb_{\mc{X}} \phi|^2 + \frac{2}{a} \int_{\mc{U}\cap \mc{D}_p} \zeta^p_{a,b;\varepsilon} f_p V^2 |\phi|^2 \\
& \qquad + a R_+^2 \int_W \zeta_{a,b;\varepsilon} f_p^{-1} |\partial_t \phi|^2 + a^4 R_+^4 \int_W \zeta^p_{a,b;\varepsilon} f_p^{-3} \phi^2 . 
\end{align*}
We use $I_{\mc{X}}$, $I_V$, and $I_W$ to denote the terms on the RHS of the above estimate:
\begin{align*}
I_{\mc{X}} & := \frac{2}{a} \int_{\mc{U}\cap \mc{D}_p} \zeta^p_{a,b;\varepsilon} f_p |\nb_{\mc{X}} \phi|^2, \\
I_V & :=  \frac{2}{a} \int_{\mc{U}\cap \mc{D}_p} \zeta^p_{a,b;\varepsilon} f_p V^2 |\phi|^2, \\
I_W & := a R_+^2 \int_W \zeta^p_{a,b;\varepsilon} f_p^{-1} |\partial_t \phi|^2 + a^4 R_+^4 \int_W \zeta^p_{a,b;\varepsilon} f_p^{-3} \phi^2.
\end{align*}
Then, using the fact that $f_p \leq r_p^2 \leq R_+^2$, and \eqref{eq_obs_pf_01_MB}, we get
\begin{align*}
I_V = \frac{2}{a} \int_{\mc{U}\cap \mc{D}_p} \zeta^p_{a,b;\varepsilon} f_p |V \phi|^2 \leqslant \frac{2 R_+^2 M_0^2}{a} \int_{\mc{U}\cap \mc{D}_p} \zeta^p_{a,b;\varepsilon} \phi^2 \ll \frac{\delta a^2}{R_+^2} \int_{\mc{U}\cap \mc{D}_p} \zeta^p_{a,b;\varepsilon} \phi^2 ,
\end{align*}
which means $I_V$ can be absorbed into the LHS of \eqref{eq_obs_pf_MB}. This implies that
\begin{equation}
C \int_{\mc{U} \cap \mc{D}_p} \zeta^p_{a,b;\varepsilon} \left( \frac{ \delta^2}{R_+^2}(|u_p \partial_{u_p} \phi |^2 + |v_p \partial_{v_p} \phi |^2 + f_p g^{ab}\slashed \nabla^p_a \phi \slashed\nabla^p_b \phi ) + \frac{\delta a^2}{R_+^2} \phi^2 \right)\leqslant I_{\mc{X}} + I_W . \label{eq_obs_pf_2_MB}
\end{equation}
Note that, the weight present in $I_\mc{X}$ is different from the weight present in the first order term on the LHS. This prevents us from absorbing $I_\mc{X}$ into the LHS appropriately. To solve this issue, we decompose the domain $\mc{U} \cap \mc{D}_p$ as follows
\begin{align*} \numberthis \label{eq_obs_pf_3_MB}
\mc{U}_\leq &:= \mc{U} \cap \mc{D}_p \cap \left\{ \frac{ f_p }{ ( 1 + \varepsilon u_p ) ( 1 - \varepsilon v_p ) } \leq \frac{ R_-^2 }{ 64 } \right\} \text{,} \\
\notag \mc{U}_> &:= \mc{U} \cap \mc{D}_p \cap \left\{ \frac{ f_p }{ ( 1 + \varepsilon u_p ) ( 1 - \varepsilon v_p ) } > \frac{ R_-^2 }{ 64 } \right\} \text{.}
\end{align*}
Since on $\mc{U}_>$, we have
\begin{align*}
v_p = \frac{ f_p }{ - u_p } \gtrsim \frac{ R_-^2 }{ R_+ } \text{,} \qquad - u_p = \frac{ f_p }{ v_p } \gtrsim \frac{ R_-^2 }{ R_+ } \text{,}
\end{align*}
then \eqref{eq_obs_pf_2_MB} implies that
\begin{equation}
C \int_{\mc{U}_>} \zeta^p_{a,b;\varepsilon} \left[ \frac{ \delta^2 R_-^2 }{R_+^2}(-u_p |\partial_{u_p} \phi |^2 + v_p |\partial_{v_p} \phi |^2 + v_p g^{ab}\slashed \nabla^p_a \phi \slashed\nabla^p_b \phi ) + \frac{\delta a^2}{R_+^2} \phi^2 \right] \leqslant I_{\mc{X}} + I_W ,\label{eq_obs_pf_4_MB}
\end{equation}
where we note that we have shrunk the integral region in the LHS. 
Now, let us use the following notations:
\[ I_{\mc{X},>} := \frac{2}{a} \int_{\mc{U}_>} \zeta^p_{a,b;\varepsilon} f_p |\nb_{\mc{X}} \phi|^2, \qquad I_{\mc{X},\leqslant} := \frac{2}{a} \int_{\mc{U}_\leqslant} \zeta^p_{a,b;\varepsilon} f_p |\nb_{\mc{X}} \phi|^2. \]
Then, note that
\begin{align*}
I_{\mc{X},>} &\leq \frac{ R_+ M_1^2 }{ a } \int_{ \mc{U}_> } \zeta^p_{ a, b; \varepsilon } ( - u_p | \partial_{ u_p } \phi |^2 + v_p | \partial_{ v_p } \phi |^2 + v_p g^{ab} \nasla^p_a \phi \nasla^p_b \phi ) \\
&\ll_{ \mc{U} } \frac{ \delta^2 R_-^2 }{ R_+^3 } \int_{ \mc{U}_> } \zeta^p_{ a, b; \varepsilon } ( - u_p | \partial_{ u_p } \phi |^2 + v_p | \partial_{ v_p } \phi |^2 + v_p g^{ab} \nasla^p_a \phi \nasla^p_b \phi ) \text{,}
\end{align*}
where we also used \eqref{eq_obs_pf_01_MB}.
Thus, after absorbing this term, \eqref{eq_obs_pf_4_MB} reduces to
\begin{equation} \label{eq_obs_pf_42_99_MB}
C \int_{\mc{U}_>} \zeta^p_{a,b;\varepsilon} \left[ \frac{ \delta^2 R_-^2 }{R_+^2}(-u_p |\partial_{u_p} \phi |^2 + v_p |\partial_{v_p} \phi |^2 + v_p g^{ab}\slashed \nabla^p_a \phi \slashed\nabla^p_b \phi ) + \frac{\delta a^2}{R_+^2} \phi^2 \right] \leqslant I_{\mc{X},\leqslant} + I_W.
\end{equation}
Since $\partial \mc{U}$ is timelike, for any $\tau \in \R$ satisfying $| \tau - t ( p ) | \leq \frac{1}{4} R_-$, we have
\begin{equation}
r_p |_{ \mc{U}_\tau } \geq \frac{ 3 R_- }{4} \text{,} \qquad -u_p |_{ \mc{U}_\tau } \geq \frac{ R_- }{4} \text{,} \qquad v_p |_{ \mc{U}_\tau } \geq \frac{ R_- }{4} \text{.}
\end{equation}
This implies that, for $\tau \in \R$ satisfying $| \tau - t ( p ) | \leq \frac{1}{4} R_-$, we have
\[ 
\left. \frac{ f_p }{ ( 1 + \varepsilon u_p ) ( 1 - \varepsilon v_p ) } \right|_{ \mc{U}_\tau } \geq \frac{ R_-^2 }{ 16 } \text{,} \qquad \zeta_{ a, b; \varepsilon }^p |_{ \mc{U}_\tau } \geq \left( \frac{ R_- }{4} e^\frac{ b R_- }{4} \right)^{ 4 a } \text{.}
\]
Due to \eqref{eq_obs_pf_3_MB}, this just means that 
\[ \mc{U} \cap \left\{  - \frac{ R_- }{4} < t_p < \frac{ R_- }{4} \right\} \subseteq \mc{U}_> \text{.} \]
Then, applying Fubini's theorem, using \eqref{eq_obs_thm_1_MB}, and\eqref{eq_obs_pf_01_MB}, we get that \eqref{eq_obs_pf_42_99_MB} reduces to
\begin{equation} \label{eq_obs_pf_40_MB}
\frac{ C \delta^2 R_-^3 }{ R_+^3 } \left( \frac{ R_- }{ 4 } e^\frac{ b R_- }{4} \right)^{4 a} \int_{ t (p) - \frac{ R_- }{4} }^{ t (p) + \frac{ R_- }{4} } \int_{ \mc{U}_\tau } (| \nabla_{ t, x } \phi |^2 + ( 1 + M_0 ) \phi^2 ) d \tau \leqslant I_{\mc{X},\leqslant} + I_W \text{.}
\end{equation}
Now, using Proposition \ref{thm.energy_est_1_MB} shows that, for $| \tau - t ( p ) | \leq \frac{1}{4} R_-$, we have
\begin{equation} \label{eq_obs_pf_41_MB}
\int_{ \mc{U}_{ t (p) } } [ | \nabla_{ t, x } \phi |^2 + ( 1 + M_0 ) \phi^2 ] \leq C e^{ C_1 ( 1 + M_0^\frac{1}{2} + M_1 ) R_- } \int_{ \mc{U}_\tau } [ | \nabla_{ t, x } \phi |^2 + ( 1 + M_0 ) \phi^2 ] \text{.}
\end{equation}
Note that on $\mc{U}_\leq$, we have
\begin{align*}
\zeta^p_{ a, b; \varepsilon } & \leq \left( \frac{ R_- }{ 8 } e^\frac{ b R_- }{ 8 } \right)^{ 4 a }  \text{,} \qquad f_p \leq R_-^2,\\
I_{\mc{X}, \leq} & \leq \frac{ C' \delta^2 R_-^4 }{ R_+^4 } \left( \frac{ R_- }{ 8 } e^\frac{ b R_- }{8} \right)^{4 a} \int_{ \mc{U} \cap \mc{D}_p } | \nabla_{ t, x } \phi |^2 ,
\end{align*}
where we also used \eqref{eq_obs_pf_01_MB}. Because $\partial \mc{U}$ is timelike, we have that on $\mc{U} \cap \mc{D}_p$ 
\[ |t_p| \leqslant R_+ ,\]
which after using Proposition \ref{thm_energy_est_2_MB} and Fubini's theorem, gives us
\begin{equation} \label{eq_obs_pf_42_MB}
I_{\mc{X}, \leq} \leq \frac{ C' \delta^2 R_-^4 }{ R_+^3 } \left( \frac{ R_- }{ 8 } e^\frac{ b R_- }{8} \right)^{4 a} e^{ C_1 ( 1 + M_0^\frac{1}{2} + M_1 ) R_+ } \int_{ \mc{U}_{ t (p) } } [ | \nabla_{ t, x } \phi |^2 + ( 1 + M_0 ) \phi^2 ]. 
\end{equation}
Now, combining \eqref{eq_obs_pf_40_MB}-\eqref{eq_obs_pf_42_MB}, and then using \eqref{eq_obs_pf_01_MB}, we get
\begin{equation} \label{eq_obs_pf_5_MB}
\frac{ C \delta^2 R_-^4 }{ R_+^3 } \left( \frac{ R_- }{ 4 } \right)^{4 a} e^{ - C_1 ( 1 + M_0^\frac{1}{2} + M_1 ) R_- } \int_{ \mc{U}_{ t (p) } } ( | \nabla_{ t, x } \phi |^2 + \phi^2 ) \leqslant I_W \text{.}
\end{equation}
For $I_W$, note that
\begin{align*} 
I_W & \leqslant a^4 R_+^4 \int_W \zeta_{a,b;\varepsilon} f_p^{-3} (|\partial_t \phi|^2 + \phi^2), \numberthis \label{eq_obs_pf_511_MB}\\
a^4 R_+^4 f_p^{-3} \zeta^p_{ a, b; \varepsilon } & \leqslant a^4 2^{4a} R_+^{4a-2}.
\end{align*}
Then, \eqref{eq_obs_pf_5_MB} implies that
\begin{equation} \label{eq_obs_pf_5111_MB}
\int_{ \mc{U}_{ t (p) } } ( | \nabla_{ t, x } \phi |^2 + \phi^2 ) \leqslant \frac{C a^4 2^{16a} R_+^{-3}}{\delta^2} \left( \frac{ R_+ }{ R_- } \right)^{4a + 4} \int_W ( |\partial_t \phi|^2 + \phi^2 ) \text{.}
\end{equation}
This completes the proof of the theorem.
\end{proof}

\begin{remark}
The above proof allows us to extract an explicit form of the observability constant. Indeed, combining \eqref{eq_obs_pf_01_MB} and \eqref{eq_obs_pf_5111_MB} gives us the exact expression of this constant. We do not present it in Theorem \ref{thm_obs_1_MB} (or in Theorem \ref{thm_obs_main_intro_MB}) to keep the statement concise.
\end{remark}

\subsection{Interior observability} \label{ssec_int_obs_MB}
We can also prove a similar observability estimate when the point $p \in \mc{U}$. Due to a technical issue, now we need to apply the Carleman estimate around two points. Essentially, the Carleman weight vanishes at the observation point which now lies inside the domain $\mc{U}$. Thus, we cannot control the $H^1$-norm of $\phi$ using the Carleman estimate. To solve this, we apply the estimate around two points and then add them together to obtain the contribution of the whole domain.

\begin{theorem} \label{thm_obs_2_MB}
Fix $ 0 < \delta \ll 1$. Let $p_1, p_2 \in \mc{U}$ be such that
\begin{equation}
p_1 \neq p_2, \qquad t(p_1) = t(p_2) = t_0.
\end{equation}
Assume that $\mc{U} \cap (\mc{D}_{p_1} \cup \mc{D}_{p_2})$ is bounded. Define the following constants
\begin{align*}
M_0 := \max_{i=1,2} \sup_{\mc{U} \cap \mc{D}_{p_i}} |V|, & \qquad M_1 := \max_{i=1,2} \sup_{\mc{U} \cap \mc{D}_{p_i}} |\mc{X}^{t,x}|, \numberthis \label{eq_obs_thm_2_MB}\\
R_+ := \max_{i=1,2} \sup_{\mc{U} \cap \mc{D}_{p_i}} r_p, & \qquad R_- := \frac{1}{2} |x(p_2) - x(p_1)| \text{.} 
\end{align*}
Now, define
\begin{align}
\Gamma_+^i & := \partial \mc{U} \cap \mc{D}_{p_i} \cap \left\{ \left( 1 - \frac{\delta^2}{R_+} r_{p_i} \right) \mc{N} f_{p_i} + \frac{\delta^2}{R_+} f_{p_i} \mc{N} r_{p_i}  >0 \right\}, \\
W_i & = \mc{O}_\sigma (\Gamma_+^i) \cap (\mc{U} \cap \mc{D}_{p_i}).
\end{align}
Then, there exists a constant $C>0$, such that the following is satisfied
\begin{equation}
\int_{\mc{U}_{t(p)}} ( |\nb_{t,x} \phi|^2 + \phi^2 ) \leqslant C \sum_{i=1,2}\int_{W_i} ( |\partial_t \phi|^2 + \phi^2) \text{,}
\end{equation}
for any solution $\phi \in C^2(\mc{U}) \cap C^1(\bar{\mc{U}})$ of \eqref{eq.intro_obs_MB} satisfying $\phi|_{\partial \mc{U} \cap (\mc{D}_{p_1} \cup \mc{D}_{p_2})}$.
\end{theorem}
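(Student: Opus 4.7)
The plan is to apply the interior Carleman estimate (Theorem \ref{thm.carl_int_MB}) around each of $p_1$ and $p_2$ separately, and to sum the two resulting bounds. A single application will only control $\phi$ on the subregion of $\mc{U}_{t_0}$ where the corresponding Carleman weight $\zeta^{p_i}_{a,b;\varepsilon}$ is nondegenerate; the key geometric observation is that, since $R_- = \tfrac{1}{2}|x(p_1)-x(p_2)|$, the triangle inequality forces $\{r_{p_1}<R_-\}\cap\mc{U}_{t_0}\subset\{r_{p_2}\geq R_-\}\cap\mc{U}_{t_0}$ and symmetrically, so the two good regions jointly cover the full time slice.

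Concretely, I would fix $\varepsilon = \delta^2/R_+$, $b = \delta/R_+$, and pick $a$ large enough that the analogues of \eqref{eq_obs_pf_01_MB} hold with the constants in \eqref{eq_obs_thm_2_MB}. For each $i \in \{1,2\}$, I would apply Theorem \ref{thm.carl_int_MB} about $p_i$ using $\square\phi = \nb_\mc{X}\phi + V\phi$, producing the exact analogue of \eqref{eq_obs_pf_MB}. Following the proof of Theorem \ref{thm_obs_1_MB} essentially line for line: the potential term is absorbed using $f_{p_i}\leq R_+^2$, and after splitting as in \eqref{eq_obs_pf_3_MB} into $\mc{U}_>^i$ and $\mc{U}_\leq^i$, the drift contribution $I^i_{\mc{X},>}$ is absorbed into the leading term on $\mc{U}_>^i$.

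The new geometric step is then to introduce, for $|\tau-t_0|\leq R_-/4$, the annular region $A_i(\tau):=\mc{U}_\tau\cap\{r_{p_i}\geq R_-\}$. A direct calculation gives $-u_{p_i}, v_{p_i}\geq 3R_-/8$ on $A_i(\tau)$, so $A_i(\tau)\subset \mc{U}_>^i$ and the weight $\zeta^{p_i}_{a,b;\varepsilon}$ satisfies the same lower bound as in the exterior proof. Mirroring the chain \eqref{eq_obs_pf_40_MB}--\eqref{eq_obs_pf_5_MB} with $\mc{U}_\tau$ replaced by $A_i(\tau)$, handling $I^i_{\mc{X},\leq}$ via Proposition \ref{thm_energy_est_2_MB} (and absorbing it thanks to the last condition in \eqref{eq_obs_pf_01_MB}), and bounding $I^i_W$ as in \eqref{eq_obs_pf_511_MB}, produces, for each $i$, a time-strip estimate of $\int_{|\tau-t_0|<R_-/4}\!\int_{A_i(\tau)}(|\nb_{t,x}\phi|^2+\phi^2)\,d\tau$ by the right-hand side over $W_i$. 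Summing over $i=1,2$ and using $A_1(\tau)\cup A_2(\tau)=\mc{U}_\tau$ replaces $A_i(\tau)$ by the full slice $\mc{U}_\tau$; a final application of Proposition \ref{thm.energy_est_1_MB} transfers the time-strip estimate to the desired single-slice estimate at $t=t_0$.

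The principal obstacle is exactly the interior vanishing of the weight, $\zeta^{p_i}_{a,b;\varepsilon}(p_i)=0$: no single application of the Carleman estimate gives any control on $\phi$ in a neighbourhood of $p_i$. The two-point construction in the statement is precisely calibrated to sidestep this, with the threshold $R_-$ chosen so that each observation point sits comfortably inside the other's nondegenerate annulus. Beyond this conceptual point, the proof is careful bookkeeping of the exterior argument carried out twice in parallel.
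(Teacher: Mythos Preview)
Your plan is essentially the paper's own argument: apply Theorem \ref{thm.carl_int_MB} at $p_1$ and $p_2$ with $\varepsilon=\delta^2/R_+$, $b=\delta/R_+$, absorb $I_V$, split into $\mc{U}^i_>$ and $\mc{U}^i_\leq$, restrict to annular slabs covering $\mc{U}_\tau$ for $|\tau-t_0|<R_-/4$, and finish with the energy estimates. The geometric covering observation you state is exactly the point; the paper uses the threshold $r_{p_i}>3R_-/4$ rather than $r_{p_i}\geq R_-$, but either works.

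There is, however, one ordering issue. You propose to absorb $I^i_{\mc{X},\leq}$ for each $i$ \emph{before} summing, so as to obtain a clean bound on $\int_{|\tau-t_0|<R_-/4}\int_{A_i(\tau)}(\cdots)$ by $\int_{W_i}(\cdots)$ alone. This step does not go through. Via Proposition \ref{thm_energy_est_2_MB} and Fubini, $I^i_{\mc{X},\leq}$ is controlled by $\int_{\mc{U}_{t_0}\cap\mc{D}_{p_i}}(|\nabla_{t,x}\phi|^2+(1+M_0)\phi^2)$, which is essentially the full cross-section $\mc{U}_{t_0}$; but your left-hand side at that stage only sees the annulus $A_i(\tau)=\{r_{p_i}\geq R_-\}$ and carries no information near $x(p_i)$. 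No energy estimate converts the $A_i$ time-strip into a bound on the whole slice $\mc{U}_{t_0}$, so the absorption fails per~$i$. The fix is simply to reverse the order, as the paper does: first sum over $i=1,2$ using $A_1(\tau)\cup A_2(\tau)=\mc{U}_\tau$ to obtain the time-strip estimate over the \emph{full} $\mc{U}_\tau$ with $\sum_i(I^i_{\mc{X},\leq}+I^i_W)$ on the right; then apply Proposition \ref{thm.energy_est_1_MB} to pass to $\mc{U}_{t_0}$; then absorb each $I^i_{\mc{X},\leq}$ (now bounded by the full $\mc{U}_{t_0}$ integral) using the $(R_-/8)^{4a}$ versus $(R_-/4)^{4a}$ gap. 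With this adjustment your argument is correct and coincides with the paper's.
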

The proof of this above theorem is analogous to the proof of Theorem \ref{thm_obs_1_MB}. We mainly outline a sketch, only providing details where new ideas are used.

\begin{proof}
We note that \eqref{eq.carl_int_domain_MB} is satisfied for $p_1,p_2$ with $R=R_+$. For convenience, we use the notation $\mc{U}^i := \mc{U} \cap \mc{D}_{p_i}$. We choose large enough $a$ such that \eqref{eq_obs_pf_01_MB} is satisfied, and also choose $\varepsilon := \delta^2 R_+^{-1} $ and $b:= \delta R_+^{-1}$. Following a similar argument as the one used to show \eqref{eq_obs_pf_2_MB}, applying Theorem \ref{thm.carl_int_MB} to $p_i$, for $i \in \{ 1,2\}$, shows that 
\begin{equation}
C \int_{\mc{U}^i} \zeta^{p_i}_{a,b;\varepsilon} \left( \frac{ \delta^2}{R_+^2}(|u_{p_i} \partial_{u_{p_i}} \phi |^2 + |v_{p_i} \partial_{v_{p_i}} \phi |^2 + f_{p_i} g^{ab}\slashed \nabla^{p_i}_a \phi \slashed\nabla^{p_i}_b \phi ) + \frac{\delta a^2}{R_+^2} \phi^2 \right)\leqslant I_{\mc{X}}^i + I_W^i, \label{eq_obs_int_pf_2_MB}
\end{equation}
where
\[I_{\mc{X}}^i := \frac{2}{a} \int_{\mc{U}^i} \zeta^{p_i}_{a,b;\varepsilon} f_{p_i} |\nb_{\mc{X}} \phi|^2, \qquad I_W^i :=  a R_+^2 \int_W \zeta^{p_i}_{a,b;\varepsilon} f_{p_i}^{-1} |\partial_t \phi|^2 + a^4 R_+^4 \int_W \zeta^{p_i}_{a,b;\varepsilon} f_{p_i}^{-3} \phi^2. \]
Splitting $\mc{U}^i$ as before, into 
\begin{align*} \numberthis \label{eq_obs_int_pf_3_MB}
\mc{U}_\leq^i &:= \mc{U}^i \cap \left\{ \frac{ f_{p_i} }{ ( 1 + \varepsilon u_{p_i} ) ( 1 - \varepsilon v_{p_i} ) } \leq \frac{ R_-^2 }{ 64 } \right\} \text{,} \\
\mc{U}_>^i &:= \mc{U}^i \cap \left\{ \frac{ f_{p_i} }{ ( 1 + \varepsilon u_{p_i} ) ( 1 - \varepsilon v_{p_i} ) } > \frac{ R_-^2 }{ 64 } \right\} \text{,}
\end{align*}
we get that \eqref{eq_obs_int_pf_2_MB} reduces to
\begin{equation} \label{eq_obs_int_pf_33_MB}
C \int_{\mc{U}^i_>} \zeta^{p_i}_{a,b;\varepsilon} \left[ \frac{ \delta^2 R_-^2 }{R_+^2}(-u_{p_i} |\partial_{u_{p_i}} \phi |^2 + v_{p_i} |\partial_{v_{p_i}} \phi |^2 + v_{p_i} g^{ab}\slashed \nabla^{p_i}_a \phi \slashed\nabla^{p_i}_b \phi ) + \frac{\delta a^2}{R_+^2} \phi^2 \right] \leqslant I_{\mc{X},\leqslant}^i + I_W^i.
\end{equation}

Now, note that because $\partial \mc{U}$ is timelike, for any $|\tau - t(p)| < \frac{1}{4} R_-$, and 
\[ \mc{V}^i_\tau := \mc{U}^i_\tau \cap \left\{ r_{p_i} > \frac{3 R_-}{4} \right\}, \]
we get that $\mc{V}^i_\tau \subset \mc{U}^i_>$. Thus, we get from \eqref{eq_obs_int_pf_33_MB} that
\begin{equation} \label{eq_obs_int_pf_40_MB}
\frac{ C \delta^2 R_-^3 }{ R_+^3 } \left( \frac{ R_- }{ 4 } e^\frac{ b R_- }{4} \right)^{4 a} \int_{ t (p) - \frac{ R_- }{4} }^{ t (p) + \frac{ R_- }{4} } \int_{ \mc{V}^i_\tau } (| \nabla_{ t, x } \phi |^2 + ( 1 + M_0 ) \phi^2 ) d \tau \leqslant I_{\mc{X},\leqslant}^i + I_W^i \text{.}
\end{equation}

Since for $|\tau - t(p)| < \frac{1}{4} R_-$, we have $\mc{V}_\tau^1 \cup \mc{V}_\tau^2 = \mc{U}_\tau $, the above estimate implies that
\begin{align*}
\frac{ C \delta^2 R_-^3 }{ R_+^3 } \left( \frac{ R_- }{ 4 } e^\frac{ b R_- }{4} \right)^{4 a} \int_{ t (p) - \frac{ R_- }{4} }^{ t (p) + \frac{ R_- }{4} } \int_{ \mc{U}_\tau } (| \nabla_{ t, x } \phi |^2 + ( 1 + M_0 ) \phi^2 ) d \tau \leqslant \sum_{i=1,2} [ I_{\mc{X},\leqslant}^i + I_W^i ] \text{.}
\end{align*}

Following a procedure analogous to \eqref{eq_obs_pf_5_MB}, we obtain
\begin{equation} \label{eq_obs_int_pf_5_MB}
\frac{ C \delta^2 R_-^4 }{ R_+^3 } \left( \frac{ R_- }{ 4 } \right)^{4 a} e^{ - C_1 ( 1 + M_0^\frac{1}{2} + M_1 ) R_- } \int_{ \mc{U}_{ t (p) } } ( | \nabla_{ t, x } \phi |^2 + \phi^2 ) \leqslant \sum_{i=1,2} I_W^i \text{.}
\end{equation}
Estimating the weights present in the integrand of $I_W^i$ similar to \eqref{eq_obs_pf_511_MB}, completes the proof of the theorem.
\end{proof}

\subsection{Proof of Theorem \ref{thm_obs_main_intro_MB}}
Now we are ready to prove the main observability result Theorem \ref{thm_obs_main_intro_MB}.
\begin{proof}
Throughout the proof, we let $p = (t_0,x_0)$. Note that, $\tau_+ - \tau_- > R_+ + R_-$ and the choice of $t_0$ implies that
\[ \partial\mc{U} \cap \mc{D}_p \subset \partial\mc{U}_{\tau_-,\tau_+}, \qquad \mc{U} \cap \mc{D}_p \subset \mc{U}_{\tau_-,\tau_+}. \]
We will divide the proof into parts, depending on the location of $p$ with respect to $\mc{U}$.

Firstly, let $p \notin \bar{\mc{U}}$. Then, Theorem \ref{thm_obs_1_MB} implies that
\begin{equation} \label{eq_obs_main_A_MB}
\int_{\mc{U}_{t(p)}} ( |\nb_{t,x} \phi|^2 + \phi^2 ) \leqslant C \int_{W} ( |\partial_t \phi|^2 + \phi^2) \text{,}
\end{equation}
where $W$ is given by \eqref{eq_obs_thm_1s_MB}.
Now, when $\delta \searrow 0$, we have 
\begin{align*}
\left\{ \left( 1 - \frac{\delta^2}{R_+} r_p \right) \mc{N} f_p + \frac{\delta^2}{R_+} f_p \mc{N} r_p > 0 \right\} \rightarrow \{ \mc{N} f_p > 0 \},
\end{align*}
That is, for sufficiently small $\delta$, we have $W \subset W'$. Then, using Proposition \ref{thm.energy_est_1_MB} to change the integral region $\mc{U}_{t(p)} \rightarrow \mc{U}_{\tau_\pm}$ on the LHS of \eqref{eq_obs_main_A_MB}, and taking $\delta$ sufficiently small, shows that \eqref{eq_obs_main_0_MB} holds for this case.

For $p \in \mc{U}$, we choose two distinct points $p_1,p_2 \in \mc{U}$, such that $t(p_1)=t(p_2)=t(p)$ and apply Theorem \ref{thm_obs_2_MB}. Now, if $\delta$ is small enough and $p_1,p_2$ are close enough then $ W_1 \cup W_2 \subset W'$. Again, using Proposition \ref{thm.energy_est_1_MB} completes the proof of this case.

For $p \in \partial \mc{U}$, we consider a point $\tilde{p}\notin\bar{\mc{U}}$ close to $p$. Then applying Theorem \ref{thm_obs_1_MB} to $\tilde{p}$ and using a similar argument as before completes the proof of this case.
\end{proof}

\providecommand{\bysame}{\leavevmode\hbox to3em{\hrulefill}\thinspace}
\providecommand{\MR}{\relax\ifhmode\unskip\space\fi MR }
\providecommand{\MRhref}[2]{%
  \href{http://www.ams.org/mathscinet-getitem?mr=#1}{#2}
}
\providecommand{\href}[2]{#2}

\end{document}